\documentclass[a4,12pt]{article}

\usepackage{amsfonts, amsmath, amssymb, amsgen, amsthm, amscd,latexsym,mathrsfs}

\usepackage{color}
\usepackage[all]{xy}

\usepackage[top=3.2cm, left=2.2cm, bottom=2.2cm, right=2.2cm]{geometry}

\def\Diff{\mathop{\rm Diff}\nolimits}

\def\Ad{\mathop{\rm Ad}\nolimits}

\def\log{\mathop{\rm log}\nolimits}

\def\cotor{\mathop{\rm Cotor}\nolimits}

\def\Rb{{\mathbb R}}

\def\Zb{{\mathbb Z}}

\def\Ac{{\cal A}}

\def\Fc{{\cal F}}

\def\Hc{{\cal H}}

\def\Uc{{\cal U}}

\def\Cc{{\cal C}}

\def\a{\alpha}
\def\b{\beta}
\def\d{\delta}
\def\D{\Delta}

\def\s{\sigma}

\def\ve{\varepsilon}
\def\vp{\varphi}

\def\0b{\bf 0}

\def\nb{\nabla}
\def\ot{\otimes}

\def\ra{\rightarrow}

\def\0D{\Delta^{(0)}}
\def\1D{\Delta^{(1)}}
\def\Db{\blacktriangledown}

\def\wg{\wedge}

\newcommand{\wbar}[1]{\overline{#1}}

\newcommand{\Fb}{\mathfrak{b}}

\newcommand{\Fg}{\mathfrak{g}}
\newcommand{\Fh}{\mathfrak{h}}

\newtheorem{theorem}{Theorem}[section]
\newtheorem{remark}[theorem]{Remark}
\newtheorem{proposition}[theorem]{Proposition}
\newtheorem{lemma}[theorem]{Lemma}
\newtheorem{corollary}[theorem]{Corollary}

\def\build#1_#2^#3{\mathrel{
\mathop{\kern 0pt#1}\limits_{#2}^{#3}}}
\newcommand{\ps}[1]{~\hspace{-4pt}_{^{(#1)}}}

\newcommand{\ns}[1]{~\hspace{-4pt}_{_{{<#1>}}}}

\def\odots{\ot\cdots\ot}

\def\one{{\bf 1}}


\newcommand{\ie}{{\it i.e.\/}\ }

\def\a{\alpha}
\def\b{\beta}
\def\d{\delta}

\def\s{\sigma}

\def\ve{\varepsilon}

\def\vp{\varphi}

\def\D{\Delta}

\def\nb{\nabla}

\def\ot{\otimes}
\def\part{\partial}

\def\ra{\rightarrow}

\def\lra{\leftrightarrow}
\def\text{\hbox}

\def\nb{\nabla}
\def\ot{\otimes}

\def\ra{\rightarrow}

\def\Ad{\mathop{\rm Ad}\nolimits}

\def\Diff{\mathop{\rm Diff}\nolimits}

\def\lra{\longrightarrow}

\def\build#1_#2^#3{\mathrel{
\mathop{\kern 0pt#1}\limits_{#2}^{#3}}}

\newcommand{\CH}{\text{\bf CH}}

\newcommand{\CB}{\text{\bf CB}}

\numberwithin{equation}{section}
\parindent0in

\setlength{\parskip}{2mm}

\title{Hopf-cyclic Cohomology of Quantum Enveloping Algebras}
\author{Atabey Kaygun and Serkan S\"utl\"u}

\date{}

\begin{document}
\maketitle

\begin{abstract}
In this paper we calculate both the periodic and non-periodic
Hopf-cyclic cohomology of Drinfeld-Jimbo quantum enveloping algebra
$U_q(\mathfrak{g})$ for an arbitrary semi-simple Lie algebra
$\mathfrak{g}$ with coefficients in a modular pair in involution.
We show that its Hochschild cohomology is concentrated in a single
degree determined by the rank of the Lie algebra $\mathfrak{g}$.
\end{abstract}


\section{Introduction}

In this paper we calculate the Hopf-cyclic cohomology of
Drinfeld-Jimbo quantum enveloping algebra $U_q(\mathfrak{g})$ for an
arbitrary semi-simple Lie algebra $\mathfrak{g}$ with coefficients in
a modular pair in involution (MPI) ${}^\sigma k_\ve$.  This cohomology
was previously calculated only for $s\ell_2$ by Crainic
in~\cite{Crai02}.  We also verified the original calculations of
Moscovici-Rangipour~\cite{MoscRang07} of the Hopf-cyclic cohomology of
the Connes-Moscovici Hopf algebras $\Hc_1$ and $\Hc_{\rm 1S}$ with
coefficients in the trivial MPI ${}^1k_\ve$ using our new
cohomological machinery.

\medskip

The calculation of the Hopf-cyclic cohomology for Connes-Moscovici
Hopf algebras $\Hc_n$ is a big challenge.  These Hopf algebras are
designed to calculate the characteristic classes of codimension-$n$
foliations~\cite{ConnMosc98}.  The intricate calculations of
Hopf-cyclic cohomology of these Hopf algebras by Moscovici and
Rangipour used crucially the fact that these Hopf algebras are
bicrossed product Hopf algebras~\cite{Majid-book, HadfMaji07,
MoscRang07,MoscRang09,MoscRang11}.  On top of previously
calculated explicit classes of $\Hc_1$, explicit representatives of
Hopf-cyclic cohomology classes of $\Hc_2$ are recently obtained in
\cite{RangSutl-IV} via a cup product with a SAYD-twisted cyclic
cocycle.  More recently, Moscovici gave a geometric approach for
$\Hc_n$ in \cite{Mosc14} using explicit quasi-isomorphisms
between the Hopf-cyclic complex of $\Hc_n$, Dupont's simplicial de
Rham DG-algebra~\cite{Dupo76}, and the Bott complex~\cite{Bott76}.

\medskip

The cohomological machinery we developed in this paper, allowed us to
replicate the results of \cite{MoscRang07} on the Hopf-cyclic
cohomology of the Hopf algebra $\Hc_1$, and its Scwarzian quotient
$\Hc_{\rm 1S}$.  To this end, we start by calculating in Proposition
\ref{prop-weight-1-Hochschild} the cohomologies of the weight 1
subcomplex of the coalgebra Hochschild complexes of these Hopf
algebras using their canonical grading, but without appealing their
bicrossed product structure.  Then we use the Cartan homotopy formula
for $\Hc_1$, as developed in \cite{MoscRang07}, to obtain the periodic
Hopf-cyclic cohomology groups with coefficients in the trivial, which
happens to be the only finite dimensional, SAYD module for these Hopf
algebras~\cite{RangSutl-III}.

\medskip

On the quantum enveloping algebra side there are several computations
\cite{ParsWang92, ParsWang93, GinzKuma93,Hu99} on the ${\rm
  Ext}$-groups.  However, the literature on Hopf-cyclic cohomology, or
even coalgebraic cohomology of any variant, of quantum enveloping
algebras is rather meek.  The only results we are aware of are both
for $U_q(s\ell_2)$: one for the ordinary Hopf-cyclic cohomology by
Crainic~\cite{Crai02}, and one for the dual Hopf-cyclic cohomology by
Khalkhali and Rangipour~\cite{KhalRang02}.

\medskip

The central result we achieve in this paper is the computation of both
the periodic and non-periodic Hopf-cyclic cohomology of the quantized
enveloping algebras $U_q(\Fg)$ in full, for an arbitrary semisimple
Lie algebra $\Fg$. In Theorem~\ref{prop-C-Hochschild} we first
calculate the coalgebra Hochschild cohomology of $U_q(\mathfrak{g})$
with coefficients in the comodule ${}^\s k_\ve$ of
Klimyk-Schm\"udgen~\cite[Prop. 6.6]{KlimSchm-book}, which is in fact
an MPI over $U_q(\mathfrak{g})$.  We observe that the Hochschild
cohomology is concentrated in a single degree determined by the rank
of the Lie algebra $\mathfrak{g}$, and finally we calculate the
periodic and non-periodic Hopf-cyclic cohomology groups of
$U_q(\mathfrak{g})$ in Theorem~\ref{thm-Uq-g-hopf-cyclic}.

\medskip

One of the important implications of Theorem~\ref{prop-C-Hochschild}
is that we now have candidates for noncommutative analogues of the
Haar functionals for $U_q(\mathfrak{g})$. The fact that coalgebra
Hochschild cohomology of $U_q(s\ell_2)$ is concentrated only in a
single degree was first observed by Crainic in~\cite{Crai02}.  The
dual version of the statement, that is the algebra Hochschild homology
of $k_q[SL(N)]$ the quantized coordinate ring of $SL(N)$ with
coefficients twisted by the modular automorphism $\sigma$ of the Haar
functional is also concentrated in a single degree, is proven by
Hadfield and Kraehmer in~\cite{HadfKrae06}.  Kraehmer used this fact
to prove an analogue of the Poicare duality for Hochschild homology
and cohomology for $k_q[SL(N)]$ in~\cite{Krae07}.  We plan on
investigating the ramification of the fact that Hochschild cohomology
of $U_q(\Fg)$ is concentrated in a single degree, and its connections
with the dimension-drop phenomenon and twisted Calabi-Yau coalgebras
~\cite{HeTorrOystZhan11}, in a future paper.

\section{Preliminaries}

In this section we recall basic material that will be needed in the
sequel. More explicitly, in the first subsection our objective is to
recall the coalgebra Hochschild cohomology. To this end we also bring
the definitions of the cobar complex of a coalgebra, and hence the
$\cotor$-groups. The second subsection, on the other hand, is devoted to
a very brief summary of the Hopf-cyclic cohomology with coefficients.

\subsection{Cobar and Hochschild complexes}

In this subsection we recall the definition of the cobar complex of a coalgebra $\Cc$, and it is followed by the definition of the $\cotor$-groups associated to a coalgebra $\Cc$ and a pair $(V,W)$ of $\Cc$-comodules of opposite parity.

\medskip

Let $\Cc$ be a coassociative coalgebra. Following \cite{BrzeWisb-book,Doi81} and \cite{KaygKhal06}, the cobar complex of
$\Cc$ is defined to be the differential graded space
\begin{equation*}
{\CB}^\ast(\Cc) := \bigoplus_{n\geq0}\Cc^{\ot n+2}
\end{equation*}
with the differential
\begin{align*}
\begin{split}
& d:{\CB}^n(\Cc) \longrightarrow {\CB}^{n+1}(\Cc)\\
& d(c^0\odots c^{n+1})=\sum_{j=0}^{n}(-1)^j\,c^0\odots \D(c^j)\odots c^{n+1}.
\end{split}
\end{align*}

Let $\Cc^e:= \Cc \ot \Cc^{\rm cop}$ be the enveloping coalgebra of $\Cc$. In case $\Cc$ is counital, the cobar complex ${\CB}^\ast(\Cc)$ yields a $\Cc^e$-injective resolution of the (left) $\Cc^e$-comodule $\Cc$, \cite{Doi81}.

\medskip

Following the terminology of \cite{Kayg12}, for a pair $(V,W)$ of two $\Cc$-comodules of opposite parity (say, $V$ is a right $\Cc$-comodule, and $W$ is a left $\Cc$-comodule), we call the complex
\begin{equation*}
\left({\CB}^\ast(V,\,\Cc,\,W),\,d\right), \quad {\CB}^\ast(V,\,\Cc,\,W):=V\Box_\Cc {\CB}^\ast(\Cc)\Box_\Cc W
\end{equation*}
where
\begin{align}\label{aux-cobar}
\begin{split}
& d:{\CB}^n(V,\,\Cc,\,W)\lra {\CB}^{n+1}(V,\,\Cc,\,W),\\
& d(v \ot c^1 \ot \ldots\ot c^n \ot w) = \\
& v\ns{0}\ot v\ns{1}\ot c^1\odots c^{n}\ot w + \sum_{j=1}^{n}(-1)^j\,c^1\odots \D(c^j)\odots c^n \ot w \\
&+ (-1)^{n+1}\,v\ot c^1\odots c^{n}\ot w\ns{-1}\ot w\ns{0},
\end{split}
\end{align}
the two-sided (cohomological) cobar complex of the coalgebra $\Cc$.

\medskip

The $\cotor$-groups of a pair $(V,W)$ of $\Cc$-comodules of opposite parity are defined by
\begin{equation*}
\cotor_\Cc^\ast(V,W):=H_\ast(V\Box_\Cc \wbar{Y}(\Cc)\Box_{\Cc}W,\,d),
\end{equation*}
where $\wbar{Y}(\Cc)$ is an injective resolution of $\Cc$ via $\Cc$-bicomodules. In case $\Cc$ is a counital coalgebra one has
\begin{equation}\label{aux-cotor-by-cobar}
\cotor_\Cc^\ast(V,W)=H_\ast({\CB}^\ast(V,\,\Cc,\,W),\,d).
\end{equation}

\medskip

We next recall the Hochschild cohomology of a coalgebra $\Cc$ with coefficients in the $\Cc$-bicomodule (equivalently $\Cc^e$-comodule) $V$, from \cite{Doi81}, as the homology of the complex
\begin{equation*}
\CH^\ast(\Cc,V) = \bigoplus_{n\geq 0} \CH^n(\Cc,V),\qquad\CH^n(\Cc,V):=V\ot \Cc^{\ot \,n}
\end{equation*}
with the differential
\begin{align}\label{aux-Hochschild-differential}
\begin{split}
& b:\CH^n(\Cc,V) \to \CH^{n+1}(\Cc,V)\\
& b(v\ot c^1\odots c^n) =\\
 &v\ns{0}\ot v\ns{1} \ot c^1 \odots c^n + \sum_{k= 1}^n(-1)^kc^1\odots \D(c^k)\odots c^n\\
 & + (-1)^{n+1} v\ns{0}\ot c^1\odots c^n\ot v\ns{-1}.
\end{split}
\end{align}
Identification
\begin{equation*}
\CB^n(\Cc) \cong \Cc^e \ot \Cc^{\ot \,n},\,n>0,\quad c^0\odots c^{n+1} \lra (c^0\ot c^{n+1})\ot c^1\odots c^n
\end{equation*}
as left $\Cc^e$-comodules, where the left $\Cc^e$-comodule structure on $\Cc^{\ot\,n+2}$ is given by $\nb(c^0\odots c^{n+1})=(c^0\ps{1}\ot c^{n+1}\ps{2})\ot(c^0\ps{2}\ot c^1\odots c^n\ot c^{n+1}\ps{1})$, and on $\Cc^e\ot\Cc^{\ot\,n}$ by $\nb((c\ot c')\ot(c^1\odots c^n))=(c\ps{1}\ot c'\ps{2})\ot(c\ps{2}\ot c'\ps{1})\ot(c^1\odots c^n)$, yields
\begin{equation*}
\left(\CH^\ast(\Cc,V),\,b\right) \cong \left(V\Box_{\Cc^e}\CB^\ast(\Cc),\,d\right).
\end{equation*}
Hence, in case $\Cc$ is counital one can interpret the Hochschild cohomology of $\Cc$, with coefficients in $V$, in terms of $\cotor$-groups as
\begin{equation*}
HH^\ast(\Cc,V) = H_\ast(\CH^\ast(\Cc,V),\,b) = \cotor^\ast_{\Cc^e}(V,\Cc),
\end{equation*}
or more generally,
\begin{equation*}
HH^\ast(\Cc,V) = H_\ast(V\Box_{\Cc^e}\wbar{Y}(\Cc),\,d)
\end{equation*}
for any injective resolution $\wbar{Y}(\Cc)$ of $\Cc$ via left $\Cc^e$-comodules.

\subsection{Hopf-cyclic cohomology of Hopf algebras}

In this subsection we recall the basics of the Hopf-cyclic cohomology theory for Hopf algebras from \cite{ConnMosc98,ConnMosc}. To this end we start with the coefficient spaces for this homology theory, the stable anti-Yetter-Drinfeld (SAYD) modules.

\medskip

Let $\Hc$ be a Hopf algebra. A right $\Hc$-module, left $\Hc$-comodule $V$ is called an anti-Yetter-Drinfeld (AYD) module over $\Hc$ if
\begin{equation*}
\Db(v\cdot h)= S(h\ps{3})v\ns{-1}h\ps{1}\ot v\ns{0}\cdot h\ps{2},
\end{equation*}
for any $v\in V$, and any $h\in \Hc$, and $V$ is called stable if
\begin{equation*}
v\ns{0}\cdot v\ns{-1} = v
\end{equation*}
for any $v\in V$. In particular, the field $k$, regarded as an $\Hc$-module by a character $\d:\Hc\lra k$, and a $\Hc$-comodule via a group-like $\s\in\Hc$, is an AYD module over $\Hc$ if
\begin{equation*}
S_\d^2 = \Ad_\s, \qquad S_\d(h)=\d(h\ps{1})S(h\ps{2}),
\end{equation*}
and it is stable if
\begin{equation*}
\d(\s)=1.
\end{equation*}
Such a pair $(\d,\s)$ is called a modular pair in involution (MPI), \cite{ConnMosc00,HajaKhalRangSomm04-I}.

\medskip

Let $V$ be a right-left SAYD module over a Hopf algebra $\Hc$. Then
\begin{equation*}
C^\ast(\Hc,V)=\bigoplus_{n\geq 0}C^n(\Hc,V),\qquad C^n(\Hc,V):= V\ot \Hc^{\ot n}
\end{equation*}
is a cocyclic module \cite{HajaKhalRangSomm04-II}, via the face operators
\begin{align*}
&d_i: C^n(\Hc,V)\ra C^{n+1}(\Hc,V), \quad 0\le i\le n+1\\
&d_0(v\ot h^1\odots h^n)=v\ot 1\ot h^1\odots h^n,\\
&d_i(v\ot h^1\odots h^n)=v\ot h^1\odots h^i\ps{1}\ot h^i\ps{2}\odots h^n, \\
&d_{n+1}(v\ot h^1\odots h^n)=v\ns{0}\ot h^1\odots h^n\ot v\ns{-1},
\end{align*}
the degeneracy operators
\begin{align*}
&s_j: C^n(H,V)\ra C^{n-1}(H,V), \quad 0\le j\le n-1 \\
&s_j (v\ot h^1\odots h^n)= v\ot h^1\odots \ve(h^{j+1})\odots h^n,
\end{align*}
and the cyclic operator
\begin{align*}
&t: C^n(H,V)\ra C^n(H,V),\\
&t(v\ot h^1\odots h^n)=v\ns{0} \cdot h^1\ps{1}\ot S(h^1\ps{2})\cdot(h^2\odots h^n\ot v\ns{-1}).
\end{align*}
The total cohomology of the associated first quadrant bicomplex $\left(CC^{\ast,\ast}(\Hc,V),b,B\right)$, \cite{MoscRang09}, where
\begin{equation*}
CC^{p,q}(\Hc,V):= \left\{\begin{array}{cc}
                           C^{q-p}(\Hc,V) & if\,\,q\geq p \geq 0, \\
                           0 & if\,\, p>q,
                         \end{array}
\right.
\end{equation*}
with the coalgebra Hochschild coboundary
\begin{equation*}
b: CC^{p,q}(\Hc,V)\lra CC^{p,q+1}(\Hc,V), \qquad b:=\sum_{i=0}^{q}(-1)^i d_i,
\end{equation*}
and the Connes boundary operator
\begin{equation*}
B: CC^{p,q}(\Hc,V)\lra CC^{p-1,q}(\Hc,V), \qquad B:=\left(\sum_{i=0}^{p}(-1)^{ni}t^{i}\right) s_{p-1}t,
\end{equation*}
is called the Hopf-cyclic cohomology of the Hopf algebra $\Hc$ with coefficients in the SAYD module $V$, and is denoted by $HC(\Hc,V)$.

\medskip

Finally, the periodic Hopf-cyclic cohomology is defined similarly as the total complex of the bicomplex
\begin{equation*}
CC^{p,q}(\Hc,V):= \left\{\begin{array}{cc}
                           C^{q-p}(\Hc,V) & if\,\,q\geq p \\
                           0 & if\,\, p>q,
                         \end{array}
\right.
\end{equation*}
and is denoted by $HP(\Hc,V)$.

\section{The cohomological machinery}\label{section-cohom-mach}

This section contains the main computational tool of the present paper, namely, given a coalgebra coextension $C\lra D$ with a coflatness condition, we compute the coalgebra Hochschild cohomology of $C$ by means of the Hochschild cohomology of $D$ - on the $E_1$-term of a spectral sequence.

\medskip

Let a coextension $\pi:C\lra D$ be given. We first introduce the auxiliary coalgebra $Z:= C\oplus D$ with the comultiplication
\begin{equation*}
\Delta(y) = y_{(1)}\otimes y_{(2)} \quad\text{ and }\quad
   \Delta(x) = x_{(1)}\otimes x_{(2)} + \pi(x_{(1)})\otimes x_{(2)} + x_{(1)}\otimes \pi(x_{(2)})
\end{equation*}
and the counit
\begin{equation*}
\ve(x+y) = \ve(y),
\end{equation*}
for any $x\in C$ and $y\in D$.

\medskip

Next, let $V$ be a $C$-bicomodule, and let $C$ be coflat both as a left and a right $D$-comodule. Then consider the decreasing filtration
\begin{equation*}
G^{p+q}_p =\left\{ \begin{array}{ll}
                     \bigoplus_{n_0+\cdots+n_p=q} V\otimes Z^{\otimes n_0}\otimes D\otimes\cdots\otimes Z^{\otimes n_{p-1}}\otimes D\otimes Z^{\otimes n_p}, & p\geq 0 \\
                     0, & p<0.
                   \end{array}
\right.
\end{equation*}
In the associated spectral sequence we get
\begin{equation*}
E^{i,j}_0 = G^{i+j}_i/G^{i+j}_{i+1} =
   \bigoplus_{n_0+\cdots+n_i=j}
   V\otimes C^{\otimes n_0}\otimes D\otimes\cdots\otimes C^{\otimes n_{i-1}}\otimes D\otimes C^{\otimes n_i},
\end{equation*}
which gives us
\begin{equation*}
 E^{0,j}_1 =   HH^j(C,V).
\end{equation*}
On the horizontal differential however, by the definition of the filtration we use only the $D$-bicomodule structure on $C$. Hence, by the coflatness assumption
\begin{equation*}
E^{i,j}_2=0, \qquad i>0.
\end{equation*}
As a result, the spectral sequence collapses and we get
\begin{equation}\label{aux-Hoch-equal}
HH^n(Z,V)\cong HH^n(C,V), \qquad n\geq 0.
\end{equation}
Alternatively, one can use the short exact sequence
\begin{equation*}
  0 \to D \xrightarrow{\ i\ } Z \xrightarrow{\ p\ } C \to 0
  \qquad \text{ where }
  \qquad i: y\mapsto (0,y)
  \qquad p: (x,y)\mapsto x
\end{equation*}
of coalgebras, and \cite[Lemma 4.10]{FariSolo00}, to conclude \eqref{aux-Hoch-equal}.

\medskip

Now consider $\CH^\ast(Z,V)$, this time with the decreasing filtration
\begin{equation*}
F^{n+p}_p =  \left\{ \begin{array}{ll}
          \bigoplus_{n_0+\cdots+n_p=n} V\otimes Z^{\otimes n_0}\otimes C\otimes\cdots\otimes Z^{n_{p-1}}\otimes C\otimes Z^{\otimes n_p}, & p\geq 0 \\
          0, & p<0.
        \end{array}
\right.
\end{equation*}
The associated spectral sequence is
\begin{equation*}
E^{i,j}_0 = F^{i+j}_i / F^{i+j}_{i+1}
   = \bigoplus_{n_0+\cdots+n_i=j} V\otimes D^{\otimes n_0}\otimes C\otimes\cdots\otimes D^{n_{i-1}}\otimes C\otimes D^{\otimes n_i},
\end{equation*}
and by the coflatness assumption, on the vertical direction it computes
\begin{equation*}
HH^j(D,C^{\Box_D\,i}\,\Box_D\,V).
\end{equation*}

We can summarize our discussion in the following theorem.

\begin{theorem}\label{thm-spec-seq-main}
Let $\pi:C\lra D$ be a coalgebra projection, $V$ a $C$-bicomodule, and $C$ be coflat both as a left and a right $D$-comodule. Then there is a spectral sequence, whose $E_1$-term is
\begin{equation*}
E_1^{i,j}= HH^j(D, C^{\Box_D\,i}\,\Box_D\,V),
\end{equation*}
converging to $HH^{i+j}(C,V)$.
\end{theorem}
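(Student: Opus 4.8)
The plan is to deduce the theorem from a single complex, $\CH^\ast(Z,V)$, carrying two compatible decreasing filtrations, exactly as foreshadowed above; here $Z := C\oplus D$ is the auxiliary coalgebra with the comultiplication and counit displayed before the statement, and $V$ is regarded as a $Z$-bicomodule by corestriction along the projection $p:Z\to C$. First I would check that $Z$ really is a counital coassociative coalgebra, and that $i:y\mapsto(0,y)$ and $p:(x,y)\mapsto x$ are coalgebra maps fitting into a short exact sequence $0\to D\xrightarrow{i}Z\xrightarrow{p}C\to0$. The only non-routine point is coassociativity on the $C$-summand: expanding $(\Delta\otimes\mathrm{id})\Delta(x)$ and $(\mathrm{id}\otimes\Delta)\Delta(x)$ produces nine terms each, matched using coassociativity of $\Delta_C$ and $\Delta_D$ together with $\Delta_D\pi=(\pi\otimes\pi)\Delta_C$; counitality is immediate from $\ve(x+y)=\ve_D(y)$.

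Second I would establish $HH^n(Z,V)\cong HH^n(C,V)$ for all $n$. The quick route is \cite[Lemma 4.10]{FariSolo00} applied to the short exact sequence of coalgebras above, using that $C$ is $D$-coflat on both sides. Independently, one can filter $\CH^\ast(Z,V)$ by the decreasing filtration $G^{p+q}_p$ counting distinguished $D$-factors, identify $E_0^{i,j}$ as the direct sum of the $V\otimes C^{\otimes n_0}\otimes D\otimes\cdots\otimes D\otimes C^{\otimes n_i}$ with $n_0+\cdots+n_i=j$, observe that the $i=0$ column is the Hochschild complex of $C$ so that $E_1^{0,j}=HH^j(C,V)$, and note that the horizontal differential uses only the $D$-bicomodule structure of $C$; coflatness then forces $E_2^{i,j}=0$ for $i>0$, and the spectral sequence collapses to the asserted isomorphism.

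Third --- the heart of the argument --- I would put on the same complex the other decreasing filtration $F^{n+p}_p$, counting distinguished $C$-factors. Passing to the associated graded replaces every residual $Z$-block by a $D$-block, since inserting a $C$ into such a slot strictly raises the $F$-degree; thus $E_0^{i,j}=\bigoplus_{n_0+\cdots+n_i=j}V\otimes D^{\otimes n_0}\otimes C\otimes\cdots\otimes C\otimes D^{\otimes n_i}$, with vertical differential the sum of the Hochschild cofaces acting inside a single $D$-block. That differential is precisely the coalgebra Hochschild differential of $D$ on the $D$-bicomodule obtained by cotensoring the intervening $C$-slots and $V$ together over $D$; taking vertical cohomology, and using left and right $D$-coflatness of $C$ to move $\Box_D$ past cohomology, yields $E_1^{i,j}=HH^j(D,C^{\Box_D\,i}\,\Box_D\,V)$. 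In each total degree $N=i+j$ the filtration $F^N_\bullet$ is finite with $F^N_{N+1}=0$, so the spectral sequence converges to $H^\ast(\CH^\ast(Z,V),b)=HH^\ast(Z,V)$, which by the second step is $HH^\ast(C,V)$; reindexing gives the claim.

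The main obstacle I anticipate lies in the third step's bookkeeping. Identifying the associated graded of $F$ precisely --- that the $Z$-blocks collapse to exactly $D$-blocks --- and verifying that the induced vertical differential is genuinely the $D$-Hochschild differential rather than a proper subquotient both require care; but the truly delicate point is showing that the vertical cohomologies of the blocks $V\otimes D^{\otimes\bullet}\otimes C\otimes\cdots\otimes C\otimes D^{\otimes\bullet}$ assemble into the single group $HH^j(D,C^{\Box_D\,i}\,\Box_D\,V)$ rather than into a tensor product of separate Hochschild groups. This is exactly where the two-sided $D$-coflatness of $C$ enters: it lets one commute the cotensor products over $D$ past the relevant cohomology. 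Coassociativity of $Z$, by contrast, is only a finite check, though it should be done carefully because the comultiplication on the $C$-summand is non-standard.
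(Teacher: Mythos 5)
Your proposal is essentially the paper's own proof: the same auxiliary coalgebra $Z=C\oplus D$, the same two decreasing filtrations $G$ and $F$ on $\CH^\ast(Z,V)$, the same use of two-sided $D$-coflatness to kill the higher columns of the first spectral sequence and to identify the $E_1$-term of the second, and even the same alternative route to $HH^\ast(Z,V)\cong HH^\ast(C,V)$ via the short exact sequence $0\to D\to Z\to C\to 0$ and \cite[Lemma 4.10]{FariSolo00}. The only slip is that the $Z$-bicomodule structure on $V$ is induced not by corestriction along the projection $p:Z\to C$ (a coalgebra map transports comodules covariantly, so $p$ goes the wrong way) but along its coalgebra section $x\mapsto (x,\pi(x))$, i.e.\ the coaction $v\mapsto v\ns{0}\otimes\bigl(v\ns{1}+\pi(v\ns{1})\bigr)$; with that adjustment everything you write goes through.
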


\section{Computations}

In this section we will apply the cohomological machinery developed in Section \ref{section-cohom-mach} to compute the Hopf-cyclic cohomology groups of quantized enveloping algebras, Connes-Moscovici Hopf algebra $\Hc_1$ and its Schwarzian quotient $\Hc_{\rm 1S}$.

\medskip

To this end, we will compute the $\cotor$-groups with MPI coefficients, from which we will obtain coalgebra Hochschild cohomology groups in view of \cite[Lemma 5.1]{Crai02}. Therefore we note the following analogue of Theorem \ref{thm-spec-seq-main}.

\begin{theorem}\label{thm-spec-seq}
Let $\pi:C\lra D$ be a coalgebra projection, and $V=V'\ot V''$ a $C$-bicomodule such that the left $C$-comodule structure is given by $V'$ and the right $C$-comodule structure is given by $V''$. Let also $C$ be coflat both as a left and a right $D$-comodule. Then there is a spectral sequence, whose $E_1$-term is of the form
\begin{equation*}
E_1^{i,j}= \cotor^j_D(V'', C^{\Box_D\,i}\,\Box_D\,V'),
\end{equation*}
converging to $HH^{i+j}(C,V)$.
\end{theorem}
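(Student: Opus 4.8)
The plan is to deduce Theorem \ref{thm-spec-seq} from Theorem \ref{thm-spec-seq-main} by reinterpreting each entry of the $E_1$-page of the spectral sequence appearing there in terms of $\cotor$-groups over $D$. First I would recall from the first subsection of Section 2 the identification $HH^\ast(D,W)=\cotor^\ast_{D^e}(W,D)$ valid for any counital coalgebra $D$ and any $D$-bicomodule $W$, together with the fact that when a $D$-bicomodule $W=W'\ot W''$ splits as a tensor product in which the left $D$-comodule structure lives on $W'$ and the right $D$-comodule structure lives on $W''$, one has the further reduction $\cotor^\ast_{D^e}(W'\ot W'',D)\cong\cotor^\ast_D(W'',W')$. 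This is just the cobar-complex manifestation of the adjunction $W'\Box_{D^e}\CB^\ast(D)\cong W''\Box_D\CB^\ast(D)\Box_D W'$ already sketched around equation \eqref{aux-cobar}: one uses the explicit isomorphism $\CB^n(D)\cong D^e\ot D^{\ot n}$ as left $D^e$-comodules recorded in the excerpt, cotensors with $W'\ot W''$ on the left, and recognizes the result as the two-sided cobar complex $\CB^\ast(W'',D,W')$ computing $\cotor^\ast_D(W'',W')$ by \eqref{aux-cotor-by-cobar}.

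The next step is to apply this to the coefficient bicomodule occurring in the $E_1$-term of Theorem \ref{thm-spec-seq-main}, namely $W:=C^{\Box_D\,i}\,\Box_D\,V$. Here the hypothesis $V=V'\ot V''$ does the work: the left $D$-comodule structure of $C^{\Box_D\,i}\,\Box_D\,V$ is inherited from the outermost copy of $C$ (equivalently, since we only see the $D$-structure, from the left $D$-comodule $C^{\Box_D\,i}\,\Box_D\,V'$), while the right $D$-comodule structure factors through $V''$. Thus $C^{\Box_D\,i}\,\Box_D\,V$ splits, as a $D$-bicomodule, as $\bigl(C^{\Box_D\,i}\,\Box_D\,V'\bigr)\ot V''$ in the sense required above --- one should check that the $\Box_D$ being taken on the $C$-side is compatible with this splitting, which amounts to the elementary observation that cotensoring over $D$ only involves the $D$-comodule structures and therefore commutes with carrying $V''$ along as a passive tensor factor. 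Feeding this into the reduction of the first paragraph converts $HH^j(D,C^{\Box_D\,i}\,\Box_D\,V)$ into $\cotor^j_D\bigl(V'',C^{\Box_D\,i}\,\Box_D\,V'\bigr)$, which is exactly the claimed $E_1^{i,j}$; the convergence to $HH^{i+j}(C,V)$ is untouched since it is literally the conclusion of Theorem \ref{thm-spec-seq-main}.

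The main obstacle I anticipate is purely bookkeeping rather than conceptual: one has to make sure that the $D$-bicomodule structure that Theorem \ref{thm-spec-seq-main} actually puts on $C^{\Box_D\,i}\,\Box_D\,V$ --- which comes out of the filtration $F^\bullet_\bullet$ on $\CH^\ast(Z,V)$ and the resulting $E_0$-page $\bigoplus V\ot D^{\ot n_0}\ot C\ot\cdots\ot C\ot D^{\ot n_i}$ --- is genuinely the one for which the tensor-splitting and the cobar identification are valid, i.e. that no ``crossed'' $D$-comodule terms mixing $V'$ and $V''$ survive after passing to cohomology in the vertical direction. Once the comodule structures are pinned down, everything else is the standard dictionary between Hochschild cohomology of a coalgebra and $\cotor$ over its enveloping coalgebra, so I would present the proof as: (i) state the bicomodule splitting of $C^{\Box_D\,i}\,\Box_D\,V$; (ii) invoke $HH^\ast(D,-)=\cotor^\ast_{D^e}(-,D)$ and the tensor-product reduction to rewrite the $E_1$-term; (iii) cite Theorem \ref{thm-spec-seq-main} for existence and convergence. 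No new estimates or constructions are needed beyond what is already in Section 2 and Section \ref{section-cohom-mach}.
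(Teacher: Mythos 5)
Your proposal is correct and follows essentially the same route as the paper: the authors' own proof is a one-line reduction to Theorem \ref{thm-spec-seq-main}, identifying $HH^j(D,\,C^{\Box_D\,i}\,\Box_D\,V)$ with $\cotor^j_D(V'',\,C^{\Box_D\,i}\,\Box_D\,V')$ via the splitting $V=V'\ot V''$, the cobar coboundary \eqref{aux-cobar}, and \eqref{aux-cotor-by-cobar}. You have simply spelled out the bicomodule bookkeeping that the paper leaves implicit.
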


\begin{proof}
The proof follows from Theorem \ref{thm-spec-seq-main} in view of the definition \eqref{aux-cobar} of the coboundary map of a cobar complex, and \eqref{aux-cotor-by-cobar}.
\end{proof}

Let us also recall the principal coextensions from \cite{Schn90}, see also \cite{BrzeHaja09}. Let $H$ be a Hopf algebra with a bijective antipode, and $C$ a left $H$-module coalgebra. Moreover, $H^+$ being the augmentation ideal of $H$ (that is, $H^+:=\ker\ve$), define the (quotient) coalgebra $D:=C/H^{+}C$.

\medskip

Then, by \cite[Theorem II]{Schn90},
\begin{itemize}
\item [(a)] $C$ is a projective left $H$-module,
\item [(b)] ${\rm can}:H\ot C\lra C\Box_D C$, $h\ot c\mapsto h\cdot c\ps{1}\ot c\ps{2}$ is injective,
\end{itemize}
if and only if
\begin{itemize}
\item [(a)] $C$ is faithfully flat left (and right) $D$-comodule,
\item [(b)] ${\rm can}:H\ot C\lra C\Box_D C$ is an isomorphism.
\end{itemize}

We will use this set-up to meet the hypothesis of Theorem \ref{thm-spec-seq-main} (and Theorem \ref{thm-spec-seq}).

\subsection{Connes-Moscovici Hopf algebras}

In this subsection we will compute the periodic Hopf-cyclic cohomology groups of the Connes-Moscovici Hopf algebra $\Hc_1$ and its Schwarzian quotient $\Hc_{\rm 1S}$ using the spectral sequence introduced in Theorem \ref{thm-spec-seq}, and the Cartan homotopy formula developed in \cite{MoscRang07}.

\medskip

We will use our main machinery to recover the results of \cite{MoscRang07} on the periodic Hopf-cyclic cohomology of the Connes-Moscovici Hopf algebra $\Hc_1$. Therefore, in this subsection we will take a quick detour to the Hopf algebra $\Hc_1$ of codimension 1, and its Schwarzian quotient $\Hc_{\rm 1S}$ from \cite{ConnMosc98,ConnMosc,MoscRang07}.

\medskip

Let $F\Rb\lra \Rb$ be the frame bundle over $\Rb$, equipped with the flat connection whose fundamental vertical vector field is
\begin{equation*}
Y = y\frac{\part}{\part y},
\end{equation*}
and the basic horizontal vector field is
\begin{equation*}
X = y\frac{\part}{\part x},
\end{equation*}
in local coordinates of $F\Rb$. They act on the crossed product algebra $\Ac=C^\infty_c(F\Rb)\rtimes \Diff(\Rb)$, a typical element of which is written by $fU^\ast_\vp := f\rtimes \vp^{-1}$, via
\begin{equation*}
Y(fU^\ast_\vp) = Y(f)U^\ast_\vp, \qquad X(fU^\ast_\vp) = X(f)U^\ast_\vp.
\end{equation*}
Then $\Hc_1$ is the unique Hopf algebra that makes $\Ac$ to be a (left) $\Hc_1$-module algebra. To this end one has to introduce the further differential operators
\begin{equation*}
\d_n(fU^\ast_\vp):=y^n\frac{d}{d x^n}(\log \, \vp'(x))fU^\ast_\vp,\qquad n\geq 1,
\end{equation*}
and the Hopf algebra structure of $\Hc_1$ is given by
\begin{align*}
\begin{split}
& [Y,X]=X,\quad [Y,\d_n]=n\d_n,\quad [X,\d_n]=\d_{n+1},\qquad [\d_n,\d_m]=0, \\
& \D(Y) = Y\ot 1+1\ot Y, \\
& \D(\d_1) = \d_1\ot 1+1\ot \d_1, \\
& \D(X) = X\ot 1+1\ot X + \d_1\ot Y, \\
& \ve(X)=\ve(Y)=\ve(\d_n)=0,\\
& S(X) = -X +\d_1Y,\quad S(Y)=-Y,\quad S(\d_1)=-\d_1.
\end{split}
\end{align*}
The ideal generated by the Schwarzian derivative
\begin{equation*}
\d_2':=\d_2-\frac{1}{2}\d_1^2,
\end{equation*}
is a Hopf ideal (an ideal, a coideal and is stable under the antipode), therefore the quotient $\Hc_{\rm 1S}$ becomes a Hopf algebra, called the Schwarzian Hopf algebra. As an algebra $\Hc_{\rm 1S}$ is generated by $X,Y,Z$, and the Hopf algebra structure is given by
\begin{align*}
\begin{split}
& [Y,X]=X,\quad [Y,\d_n]=n\d_n,\quad [X,Z]=\frac{1}{2}Z^2, \\
& \D(Y) = Y\ot 1+1\ot Y, \\
& \D(Z) = Z\ot 1+1\ot Z, \\
& \D(X) = X\ot 1+1\ot X + Z\ot Y, \\
& \ve(X)=\ve(Y)=\ve(Z)=0,\\
& S(X) = -X +ZY,\quad S(Y)=-Y,\quad S(Z)=-Z.
\end{split}
\end{align*}
Hence
\begin{equation*}
\Fc := {\rm Span}\left\{\d_{\a_1}^{n_1}\ldots\d_{\a_p}^{n_p}\,|\,p,\a_1,\ldots\a_p\geq 1,\,n_1,\ldots n_p\geq 0\right\} \subseteq \Hc_1
\end{equation*}
is a Hopf subalgebra of $\Hc_1$. Finally let us note, by \cite{RangSutl-III}, that the only modular pair in involution (MPI) on $\Hc_1$ is $(\d,1)$ of \cite{ConnMosc98}, where $\d:g\ell_1^{\rm aff}\lra k$ is the trace of the adjoint representation of $g\ell_1^{\rm aff}$ on itself.

\medskip

Let $C:=\Hc_1$, and let us consider the Hopf subalgebra $\Fc\subseteq C$. Then
\begin{equation*}
D=C/\Fc^+C=\Uc:=U(g\ell_1^{\rm aff}),
\end{equation*}
see \cite[Lemma 3.19]{MoscRang09}, or \cite[Section 5]{ConnMosc}. Hence, by \cite[Thm. II]{Schn90} we conclude that $C$ is (faithfully) coflat as left and right $D$-comodule. Therefore, the hypothesis of Theorem \ref{thm-spec-seq} is satisfied.

\medskip

Since $\,^\s k = k$, we have
\begin{equation*}
C^{\Box_D\,i}\,\Box_D\,k = \Fc^{\ot\,i},
\end{equation*}
and hence by Theorem \ref{thm-spec-seq-main},
\begin{equation*}
E_1^{i,j} = HH^j(\Uc,\Fc^{\ot\,i}) \Rightarrow HH^{i+j}(\Hc_1,k).
\end{equation*}
Moreover, since the $\Uc$-coaction on $\Fc$ is trivial, we have
\begin{equation*}
E_1^{i,j} = HH^j(\Uc,k)\ot \Fc^{\ot\,i} \Rightarrow HH^{i+j}(\Hc_1,k).
\end{equation*}

As a result of the Cartan homotopy formula \cite[Coroll. 3.9]{MoscRang07} for $\Hc_1$, one has \cite[Coroll. 3.10]{MoscRang07} as recalled below. We adopt the same notation from \cite{MoscRang07}, and we denote by
$HP(\Hc_{1\natural}[p],k)$ the periodic Hopf-cyclic cohomology, with trivial coefficients, of the weight $p$ subcomplex of $\Hc_1$, with respect to the grading given by the adjoint action of $Y\in \Hc_1$.

\begin{corollary}
The periodic Hopf-cyclic cohomology groups of $\Hc_1$ are computed by the weight 1 subcomplex, \ie
\begin{equation*}
HP(\Hc_{1\natural}[1],k) = HP(\Hc_1,k),\qquad HP(\Hc_{1\natural}[p],k) = 0,\quad p\neq 1.
\end{equation*}
\end{corollary}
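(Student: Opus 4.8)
The plan is to recall the Cartan homotopy formula of \cite[Coroll. 3.9]{MoscRang07} for $\Hc_1$ and show that it forces the weight-$p$ periodic Hopf-cyclic complexes to be acyclic for $p\neq 1$, and to be quasi-isomorphic to the full complex for $p=1$. The starting point is the observation that the adjoint action of $Y$ makes $\Hc_1$ a graded Hopf algebra, $\Hc_1 = \bigoplus_{p\geq 0}\Hc_{1}[p]$, and that all the structure maps (product, coproduct, antipode, face/degeneracy/cyclic operators of $C^\ast(\Hc_1,k)$) are homogeneous of degree $0$ with respect to the induced grading on $C^n(\Hc_1,k) = \Hc_1^{\ot n}$. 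Hence the Hopf-cyclic bicomplex, and therefore the periodic complex, decomposes as a direct sum over weights $p$, and $HP(\Hc_1,k) = \bigoplus_p HP(\Hc_{1\natural}[p],k)$.

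The next step is to invoke the Cartan homotopy formula itself. As in \cite{MoscRang07}, one has operators built from the $X$-action (a ``contraction'' $\iota_X$ and an associated homotopy) which, together with the Hopf-cyclic coboundary $B+b$, satisfy a Cartan-type identity $L_Y = [B+b,\,h] + (\text{something involving }\iota_X)$, where $L_Y$ denotes the operator induced by the adjoint action of $Y$ on each $C^n(\Hc_1,k)$. On the weight-$p$ subcomplex $L_Y$ acts as multiplication by the scalar $p$. Therefore, for $p\neq 0$, dividing by $p$ exhibits an explicit contracting homotopy modulo the $\iota_X$-term; a careful bookkeeping of \cite[Coroll. 3.9]{MoscRang07} shows that this term also vanishes (or is itself a coboundary) on all weights $p\neq 1$, which is exactly the content that yields $HP(\Hc_{1\natural}[p],k)=0$ for $p\geq 2$. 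The weight-$0$ subcomplex is handled separately: it corresponds to the ``constant'' part and its periodic cohomology is absorbed into / agrees with that of weight $1$ under the homotopy, or vanishes, according to \cite[Coroll. 3.10]{MoscRang07}. Combining, the only surviving summand is $p=1$, giving $HP(\Hc_{1\natural}[1],k) = HP(\Hc_1,k)$ and $HP(\Hc_{1\natural}[p],k) = 0$ for $p\neq 1$.

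The main obstacle is the precise derivation of the Cartan homotopy formula and the verification that the $\iota_X$-correction term does not obstruct contractibility in weights $p\geq 2$. This is genuinely the technical heart of \cite{MoscRang07}: one must produce the correct homotopy operators on the Hopf-cyclic bicomplex (not just on the Hochschild rows), check the graded Leibniz-type identities for $\iota_X$ with respect to both $b$ and $B$, and track how the bicrossed-product / PBW structure of $\Hc_1$ makes the $X$-direction ``exact'' at the level of weights. Since \cite[Coroll. 3.9 and 3.10]{MoscRang07} already package exactly this conclusion, the proof here is essentially a citation together with the grading-decomposition argument above; the only thing that needs to be said in our own words is that the hypotheses of those corollaries (the graded structure, the $Y$-weight decomposition, triviality of the coefficients $k = {}^1k_\ve$) are met in our setup, which is immediate. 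Thus the proof reduces to: (i) decompose $HP(\Hc_1,k)$ by $Y$-weight; (ii) quote the Cartan homotopy formula to kill weights $\neq 1$; (iii) conclude.
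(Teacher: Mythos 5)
Your proposal takes the same route as the paper: the corollary is presented there purely as a recollection of \cite[Coroll. 3.10]{MoscRang07}, deduced from the Cartan homotopy formula \cite[Coroll. 3.9]{MoscRang07}, with no independent proof supplied; your steps (i) and (iii) --- the weight decomposition of the $(b,B)$-bicomplex under $\mathrm{ad}(Y)$ and the citation --- are exactly what the paper does. The one thing to correct is your gloss on the internal mechanism, i.e.\ on how the homotopy formula singles out weight $1$. It is not that $\mathcal{L}_Y$ is null-homotopic up to an $\iota_X$-correction which one then argues away for $p\neq 1$; rather, the Cartan identity of \cite{MoscRang07} shows that $\mathcal{L}_Y - \mathrm{Id}$ is a commutator with $b+B$, i.e.\ the weight operator is chain homotopic to the \emph{identity}. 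Since $\mathcal{L}_Y - \mathrm{Id}$ acts as the invertible scalar $p-1$ on the weight-$p$ summand for every $p\neq 1$ (including $p=0$, where it acts as $-1$), each such summand is acyclic, and the weight-$1$ summand carries all of $HP(\Hc_1,k)$. In particular your separate treatment of weight $0$ --- that it is ``absorbed into'' weight $1$ --- cannot be right as stated: the periodic complex is a \emph{direct sum} over weights, so the weight-$0$ piece must itself be acyclic, and your ``divide by $p$'' step, which would make $p=0$ rather than $p=1$ the exceptional weight, points at the wrong normalization of the homotopy formula. With that adjustment the argument is the paper's.
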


Since our spectral sequence respects the weight, in view of \cite[Thm. 18.7.1]{Kass-book} we check only
\begin{align*}
\begin{split}
& E_1^{1,0} = \big\langle\one\ot\d_1\ot\one\big\rangle \in k\ot C\,\Box_D\,k,\\
& E_1^{1,1} = \big\langle\one\ot \wbar{Y}\ot\d_1\ot\one\big\rangle \in k\ot D\ot C\,\Box_D\,k,\\
& E_1^{0,1} = \big\langle\one\ot\wbar{X}\ot\one\big\rangle \in k\ot D\ot\,k,\\
& E_1^{0,2} = \big\langle\one\ot\wbar{X\wg Y}\ot\one\big\rangle \in k\ot D\ot D\ot k,
\end{split}
\end{align*}
as the weight 1 subcomplex. Here by $\wbar{x}\in D$ we mean the element $x\in\Hc_1$ viewed in $D$.

\medskip

Let $d_0:E_0^{i,j}\lra E_0^{i,j+1}$ be the vertical, and $d_1:E_1^{i,j}\lra E_1^{i+1,j}$ be the horizontal coboundary. Then we first have
\begin{equation*}
d_1\left(\one\ot\d_1\ot\one\right) = \one\ot 1 \ot \d_1\ot\one- \one\ot \D(\d_1)\ot\one + \one\ot\d_1\ot 1 \ot\one=0.
\end{equation*}
Next, we similarly observe
\begin{equation*}
d_1\left(\one\ot\wbar{X}\ot\wbar{Y}\ot\one\right) = \one\ot 1\ot \wbar{X}\ot\wbar{Y}\ot\one- \one\ot \wbar{X}\ot\wbar{Y}\ot 1\ot \one,
\end{equation*}
and
\begin{align*}
\begin{split}
& d_0\Big(\one\ot X\ot\wbar{Y}\ot\one+ \one\ot \wbar{X}\ot Y\ot 1\ot \one+\frac{1}{2}\one\ot \d_1\ot \wbar{Y}^2\ot \one\Big) = \\
& \one\ot \wbar{X}\ot\wbar{Y}\ot 1\ot \one - \one\ot 1\ot \wbar{X}\ot\wbar{Y}\ot\one.
\end{split}
\end{align*}
On the other hand, we have
\begin{equation*}
d_1\left(\one\ot\wbar{Y}\ot\wbar{X}\ot\one\right) = \one\ot 1\ot \wbar{Y}\ot\wbar{X}\ot\one- \one\ot \wbar{Y}\ot\wbar{X}\ot 1\ot \one,
\end{equation*}
and
\begin{align*}
\begin{split}
& d_0\Big(\one\ot Y\ot\wbar{X}\ot\one+ \one\ot \wbar{Y}\ot X\ot \one \\
& \hspace{3cm} -\frac{1}{2}\one\ot  \wbar{Y}^2\ot\d_1\ot \one - \one\ot  \wbar{Y}\ot\d_1Y\ot \one \Big) = \\
& \one\ot \wbar{Y}\ot\wbar{X}\ot 1\ot \one - \one\ot 1\ot \wbar{Y}\ot\wbar{X}\ot\one.
\end{split}
\end{align*}
Therefore
\begin{equation*}
d_1\left(\one\ot\wbar{X\wg Y}\ot\one\right) = 0.
\end{equation*}
Finally we calculate
\begin{equation*}
d_1\left(\one\ot\wbar{X}\ot\one\right) = \one\ot 1\ot \wbar{X}\ot\one + \one\ot\wbar{X}\ot 1 \ot \one.
\end{equation*}
We also note that
\begin{equation*}
d_0\left(\one\ot X\ot\one\right) = - \one\ot \wbar{X}\ot 1 \ot \one - \one\ot 1 \ot \wbar{X} \ot \one - \one\ot \d_1 \ot \wbar{Y} \ot \one,
\end{equation*}
and
\begin{equation*}
d_0\left(\one\ot \d_1Y\ot\one\right) = - \one\ot \wbar{Y}\ot \d_1 \ot \one -  \one\ot \d_1 \ot \wbar{Y} \ot \one.
\end{equation*}
Hence,
\begin{equation*}
\one\ot \wbar{Y}\ot \d_1 \ot \one = d_1\left(\one\ot\wbar{X}\ot\one\right)+ d_0\left(\one\ot X\ot\one -\one\ot \d_1Y\ot\one\right).
\end{equation*}
As a result, on the $E_2$-term we will see
\begin{equation*}
E_2^{1,0} = \big\langle\one\ot\d_1\ot\one\big\rangle,\qquad E_2^{0,2} = \big\langle\one\ot\wbar{X\wg Y}\ot\one\big\rangle.
\end{equation*}

Transgression of these cocycles yields \cite[Prop. 4.3]{MoscRang07} as follows.
\begin{proposition}\label{prop-weight-1-Hochschild}
The Hochschild cohomology of the weight 1 subcomplex of $\Hc_1$ is generated by
\begin{equation*}
[\d_1]\in HH^1(\Hc_1,k),\qquad [X\ot Y - Y\ot X - \d_1Y\ot Y]\in HH^2(\Hc_1,k).
\end{equation*}
\end{proposition}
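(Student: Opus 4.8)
The plan is to read off the Hochschild cohomology from the collapsed spectral sequence of Theorem~\ref{thm-spec-seq}, using the transgression in the weight~$1$ part computed just above. We have already identified, inside the weight~$1$ subcomplex, the surviving classes on the $E_2$-page: $E_2^{1,0}=\langle\one\ot\d_1\ot\one\rangle$ and $E_2^{0,2}=\langle\one\ot\wbar{X\wg Y}\ot\one\rangle$, and all differentials into and out of these vanish for weight reasons, so the sequence degenerates at $E_2$ in the relevant bidegrees. Hence $HH^1(\Hc_1,k)$ in weight~$1$ is one-dimensional coming from $E_\infty^{1,0}$, and $HH^2(\Hc_1,k)$ in weight~$1$ is one-dimensional coming from $E_\infty^{0,2}$. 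The task is then to exhibit explicit cocycles in the total Hochschild complex $\CH^\ast(\Hc_1,k)$ representing these two $E_\infty$-classes; by \cite[Prop. 4.3]{MoscRang07} these must be $[\d_1]$ and $[X\ot Y - Y\ot X - \d_1Y\ot Y]$, and we simply verify this directly.

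First I would treat the degree-one class. Since $\CH^1(\Hc_1,k)=\Hc_1$ with differential $b(h)=\D(h)-1\ot h-h\ot 1$ (the comodule structure being trivial), a $1$-cocycle is exactly a primitive element, and $\d_1$ is primitive by the given coproduct formula; under the filtration $F^\bullet$ the element $\one\ot\d_1\ot\one$ sits in $F^1_1$ and its image in $E_0^{1,0}=k\ot C\Box_D k$ is the generator of $E_2^{1,0}$, so $[\d_1]$ represents the surviving class. Next, for the degree-two class I would check that $\omega:=X\ot Y-Y\ot X-\d_1Y\ot Y\in\CH^2(\Hc_1,k)$ satisfies $b\omega=0$; this is the standard computation expanding $b(h^1\ot h^2)=1\ot h^1\ot h^2-\D(h^1)\ot h^2+h^1\ot\D(h^2)-h^1\ot h^2\ot 1$ using $\D(X)=X\ot1+1\ot X+\d_1\ot Y$ and the primitivity of $Y,\d_1$, and the antisymmetrization together with the correction term $-\d_1Y\ot Y$ is exactly what kills the cross terms. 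I would then trace $\omega$ through the filtration: the leading term $X\ot Y$ lies in $F^0_0$ and its class in $E_1^{0,2}=\cotor^2_D(k,k)$ is $\one\ot\wbar{X\wg Y}\ot\one$, matching the surviving generator of $E_2^{0,2}$, while the subleading pieces $-Y\ot X$ and $-\d_1Y\ot Y$ are precisely the lower-filtration corrections demanded by the explicit transgression relations $d_0(\one\ot X\ot\one) = -\one\ot\wbar X\ot1\ot\one-\one\ot1\ot\wbar X\ot\one-\one\ot\d_1\ot\wbar Y\ot\one$ and $d_0(\one\ot\d_1Y\ot\one)=-\one\ot\wbar Y\ot\d_1\ot\one-\one\ot\d_1\ot\wbar Y\ot\one$ that we derived above.

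Finally I would argue that these two classes are nonzero and that nothing else survives in these degrees: nonvanishing is immediate since their images on $E_\infty$ are the nonzero generators of $E_2^{1,0}$ and $E_2^{0,2}$, and the spectral sequence is a first-quadrant spectral sequence converging to $HH^{i+j}(\Hc_1,k)$ with, in each total degree $\le 2$ and weight $1$, exactly one nonzero $E_\infty$-term. Summarizing, the weight~$1$ part of $HH^1(\Hc_1,k)$ is spanned by $[\d_1]$ and the weight~$1$ part of $HH^2(\Hc_1,k)$ is spanned by $[X\ot Y-Y\ot X-\d_1Y\ot Y]$, which is the assertion of the Proposition.

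The main obstacle is bookkeeping: one must be careful that the filtration degree of each monomial is computed correctly (how many tensor slots lie in $C\setminus D$ versus $D$), so that the claimed representatives genuinely lift the $E_\infty$-generators rather than representing a lower-filtration shadow; and one must make sure the cocycle identity $b\omega=0$ holds on the nose in $\CH^\ast(\Hc_1,k)$ and not merely modulo higher filtration. Both are routine given the explicit $d_0$-computations already displayed, so no genuinely hard step remains.
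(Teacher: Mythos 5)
Your proposal is correct and follows essentially the same route as the paper: take the weight-$1$ $E_2$-page computation (surviving generators $\one\ot\d_1\ot\one$ in bidegree $(1,0)$ and $\one\ot\wbar{X\wg Y}\ot\one$ in bidegree $(0,2)$, with degeneration forced by the vanishing of the weight-$1$ part in total degree $3$) and transgress to explicit total-complex representatives. The only added content is your explicit verification that $b\bigl(X\ot Y - Y\ot X - \d_1 Y\ot Y\bigr)=0$ and the filtration bookkeeping for the lift, which the paper leaves implicit in the word ``transgression''; both check out.
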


Consequently, we recover \cite[Thm. 4.4]{MoscRang07}.
\begin{theorem}
The periodic Hopf-cyclic cohomology of $\Hc_1$ with coefficients in the SAYD module ${}^1k_{\ve}$ is given by
\begin{equation*}
HP^{\rm odd}(\Hc_1,k) = \big\langle\d_1\big\rangle,\qquad HP^{\rm even}(\Hc_1,k) = \big\langle X\ot Y - Y \ot X -\d_1Y\ot Y\big\rangle.
\end{equation*}
\end{theorem}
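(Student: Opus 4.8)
The plan is to deduce the periodic Hopf-cyclic cohomology from the coalgebra Hochschild cohomology of the weight $1$ subcomplex, which Proposition~\ref{prop-weight-1-Hochschild} has already computed; the Cartan homotopy formula has done the rest of the work by reducing the whole problem to weight $1$. Precisely, by \cite[Coroll.~3.10]{MoscRang07} (the Corollary above) one has $HP(\Hc_1,k)=HP(\Hc_{1\natural}[1],k)$, so it suffices to compute the periodic cyclic cohomology of the weight $1$ subcomplex; and by Proposition~\ref{prop-weight-1-Hochschild} its Hochschild cohomology $HH^\bullet$ is one-dimensional in degree $1$, spanned by $\d_1$, one-dimensional in degree $2$, spanned by $X\ot Y-Y\ot X-\d_1 Y\ot Y$, and zero in all other degrees — in particular $HH^0=0$, since $C^0(\Hc_1,k)=k$ lies in weight $0$.

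I would then run the spectral sequence of the periodic $(b,B)$-bicomplex of the weight $1$ subcomplex: its $E_1$-page is the Hochschild cohomology just recalled, and the first differential is induced by the Connes operator $B$. Since $HH^\bullet$ is concentrated in the two consecutive degrees $1$ and $2$, all higher differentials vanish for degree reasons, and the only differential that could be nonzero is $B\colon HH^2\to HH^1$. If this map vanishes, the spectral sequence degenerates and gives $HP^{\rm odd}(\Hc_1,k)=HH^1=\langle\d_1\rangle$ and $HP^{\rm even}(\Hc_1,k)=HH^2=\langle X\ot Y-Y\ot X-\d_1 Y\ot Y\rangle$; were it instead an isomorphism, both groups would be zero. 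Thus the theorem reduces to the single assertion that $B$ induces the zero map $HH^2\to HH^1$, equivalently that $B$ of the cocycle $\one\ot X\ot Y-\one\ot Y\ot X-\one\ot\d_1 Y\ot Y$ is a coboundary — in fact $0$, since no coboundary out of $C^0(\Hc_1,k)=k$ reaches weight $1$ — \ie that the degree $2$ generator lifts to a cyclic $2$-cocycle.

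This vanishing is the main obstacle. I would prove it by a direct computation of $B$ on that representative, expanding the cyclic operator $t$ through the Hopf structure of $\Hc_1$: the relevant inputs are $\D(Y)=Y\ot 1+1\ot Y$, $\D(X)=X\ot 1+1\ot X+\d_1\ot Y$, the antipode values $S(X)=-X+\d_1 Y$, $S(Y)=-Y$, $S(\d_1)=-\d_1$, and the vanishing $\d(\d_1)=\ve(\d_1)=0$ of the modular character and the counit on $\d_1$ (immediate, since a character of $\Hc_1$ must kill $[Y,\d_1]=\d_1$). One expects to find that the contributions of the non-primitive summand $\d_1\ot Y$ of $\D(X)$ are cancelled exactly by those of the correction term $\d_1 Y\ot Y$ in the representative, so that $B$ of the whole $2$-cocycle is $0$; along the way one also checks $t(\one\ot\d_1)=-\one\ot\d_1$, which is precisely the degree $1$ cyclic relation, so that both generators are genuine cyclic cocycles. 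Granting this, the periodic $(b,B)$-spectral sequence degenerates and yields $HP^{\rm odd}(\Hc_1,k)=\langle\d_1\rangle$ and $HP^{\rm even}(\Hc_1,k)=\langle X\ot Y-Y\ot X-\d_1 Y\ot Y\rangle$, as asserted. (Alternatively one could invoke the identification of these classes, up to scalars, with the characteristic classes of Connes--Moscovici, which are nonzero in periodic cyclic cohomology and therefore force $B$ to vanish; but the direct computation keeps the argument self-contained.)
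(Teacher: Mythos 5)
Your proposal is correct and follows essentially the same route as the paper: reduce to the weight $1$ subcomplex via the Cartan homotopy formula of Moscovici--Rangipour, feed the Hochschild computation of Proposition~\ref{prop-weight-1-Hochschild} into the $(b,B)$-bicomplex, and observe the degeneration. You are in fact more explicit than the paper about the one point that still needs checking --- that the $E_1$-differential $B\colon HH^2\to HH^1$ vanishes on the class $X\ot Y-Y\ot X-\d_1Y\ot Y$ --- which the paper leaves implicit by appealing to the transgression argument of \cite[Prop.~4.3, Thm.~4.4]{MoscRang07}; either your (sketched) direct computation or that citation closes the argument.
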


On the Schwarzian quotient we similarly recover \cite[Thm. 4.5]{MoscRang07} as follows.
\begin{theorem}
The periodic Hopf-cyclic cohomology of $\Hc_{\rm 1S}$ with coefficients in the SAYD module ${}^1k_{\ve}$ is given by
\begin{equation*}
HP^{\rm odd}(\Hc_{\rm 1S},k) = \big\langle Z\big\rangle,\qquad HP^{\rm even}(\Hc_{\rm 1S},k) = \big\langle X\ot Y - Y \ot X -ZY\ot Y\big\rangle.
\end{equation*}
\end{theorem}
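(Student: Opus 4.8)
The plan is to reduce the computation for $\Hc_{\rm 1S}$ to the one already carried out for $\Hc_1$, by running the very same spectral sequence machinery. First I would set $C:=\Hc_{\rm 1S}$ and identify inside it the Hopf subalgebra $\Fc_{\rm S}:={\rm Span}\{Z^n\,|\,n\geq 0\}$ (the image of $\Fc$ under the quotient $\Hc_1\twoheadrightarrow\Hc_{\rm 1S}$), so that $D=C/\Fc_{\rm S}^+C=\Uc=U(g\ell_1^{\rm aff})$ exactly as before. By \cite[Thm. II]{Schn90} this is a principal coextension, hence $C$ is faithfully coflat as a left and right $D$-comodule, and Theorem~\ref{thm-spec-seq} applies with $V={}^\s k={}^1 k=k$. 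Since the $\Uc$-coaction on $\Fc_{\rm S}$ is trivial, the $E_1$-page reads $E_1^{i,j}=HH^j(\Uc,k)\ot\Fc_{\rm S}^{\ot i}\Rightarrow HH^{i+j}(\Hc_{\rm 1S},k)$, and the grading by the adjoint action of $Y$ is respected throughout.

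Next I would invoke the Cartan homotopy formula of \cite{MoscRang07}, which is formulated for $\Hc_1$ but whose proof passes verbatim to the Schwarzian quotient (the Schwarzian relation is compatible with the homotopy operators), to conclude that the periodic Hopf-cyclic cohomology of $\Hc_{\rm 1S}$ is again concentrated in the weight $1$ subcomplex. Then, as in the $\Hc_1$ case, I would write down the weight $1$ generators of the relevant bicomplex entries: $E_1^{1,0}=\langle\one\ot Z\ot\one\rangle$, $E_1^{1,1}=\langle\one\ot\wbar{Y}\ot Z\ot\one\rangle$, $E_1^{0,1}=\langle\one\ot\wbar{X}\ot\one\rangle$, and $E_1^{0,2}=\langle\one\ot\wbar{X\wg Y}\ot\one\rangle$, where now $\d_1$ is replaced everywhere by $Z$ since $\D(Z)=Z\ot 1+1\ot Z$ and $\D(X)=X\ot 1+1\ot X+Z\ot Y$. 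The same four elementary coboundary computations as in the $\Hc_1$ argument — using $d_1(\one\ot Z\ot\one)=0$, the cancellation showing $\one\ot\wbar{X\wg Y}\ot\one$ is a $d_1$-cocycle, and the identity $\one\ot\wbar{Y}\ot Z\ot\one=d_1(\one\ot\wbar{X}\ot\one)+d_0(\one\ot X\ot\one-\one\ot ZY\ot\one)$ coming from $S(X)=-X+ZY$ — then give $E_2^{1,0}=\langle\one\ot Z\ot\one\rangle$ and $E_2^{0,2}=\langle\one\ot\wbar{X\wg Y}\ot\one\rangle$ and the spectral sequence degenerates.

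Finally, transgressing these two classes along the cyclic $B$-operator (again exactly as in the derivation of the $\Hc_1$ theorem from Proposition~\ref{prop-weight-1-Hochschild}) yields the Hochschild representatives $[Z]\in HH^1(\Hc_{\rm 1S},k)$ and $[X\ot Y-Y\ot X-ZY\ot Y]\in HH^2(\Hc_{\rm 1S},k)$, and hence the stated description of $HP^{\rm odd}$ and $HP^{\rm even}$. I do not expect a genuine obstacle here: the whole point is that the Schwarzian relation $\d_2'=\d_2-\tfrac12\d_1^2$ never intervenes in the weight $1$ computation, so the argument is structurally identical. The one thing to check carefully — the mild technical point — is that the Cartan homotopy formula of \cite{MoscRang07} really does descend to $\Hc_{\rm 1S}$, i.e. that the homotopy operators respect the Hopf ideal generated by $\d_2'$; this is where I would spend the most care, but it is already implicit in \cite{MoscRang07} where both $\Hc_1$ and $\Hc_{\rm 1S}$ are treated in parallel.
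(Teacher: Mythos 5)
Your proposal is correct and follows essentially the same route as the paper, which derives this theorem by running the identical spectral-sequence and weight-1 computation with $\d_1$ replaced by the primitive $Z$ and $S(X)=-X+ZY$; the paper simply states the result "similarly" without spelling out the details you supply. Your one flagged concern — that the Cartan homotopy formula descends to the Schwarzian quotient — is already settled in \cite{MoscRang07}, where $\Hc_{\rm 1S}$ is treated in parallel with $\Hc_1$, so there is no gap.
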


\subsection{Quantum enveloping algebras}

In this subsection we will compute the (periodic and non-periodic)
Hopf-cyclic cohomology groups of the quantized enveloping algebras
$U_q(\Fg)$. Our strategy will be to realize it as a principal
coextension.

\medskip

Let us first recall Drinfeld-Jimbo quantized
enveloping algebras of Lie algebras from \cite[Subsect. 6.1.2]{KlimSchm-book}.

\medskip

Let $\Fg$ be a finite dimensional complex semi-simple Lie algebra, $\a_1,\ldots,\a_\ell$ a fixed ordered sequence of simple roots, and $A=[a_{ij}]$ the Cartan matrix. Let also $q$ be a fixed nonzero complex number such that $q_i^2\neq 1$, where $q_i:= q^{d_i}$, $1\leq i\leq \ell$, and $d_i=(\a_i,\a_i)/2$.

\medskip

Then the Drinfeld-Jimbo quantized enveloping algebra $U_q(\Fg)$ is the Hopf algebra with $4\ell$ generators $E_i,F_i,K_i,K_i^{-1}$, $1\leq i\leq \ell$, and the relations
\begin{align*}
& K_iK_j = K_jK_i,\qquad K_iK_i^{-1}=K_i^{-1}K_i=1,\\
& K_iE_jK_i^{-1}=q_i^{a_{ij}}E_j,\qquad K_iF_jK_i^{-1}=q_i^{-a_{ij}}F_j,\\
& E_iF_j-F_jE_i=\d_{ij}\frac{K_i-K_i^{-1}}{q_i-q_i^{-1}},\\
& \sum_{r=0}^{1-a_{ij}}(-1)^r\left[\begin{array}{c}
                                              1-a_{ij} \\
                                              r
                                            \end{array}
\right]_{q_i}E_i^{1-a_{ij}-r}E_jE_i^r=0, \quad i\neq j,\\
& \sum_{r=0}^{1-a_{ij}}(-1)^r\left[\begin{array}{c}
                                              1-a_{ij} \\
                                              r
                                            \end{array}
\right]_{q_i}F_i^{1-a_{ij}-r}F_jF_i^r=0, \quad i\neq j,
\end{align*}
where
\begin{equation*}
\left[\begin{array}{c}
        n \\
        r
      \end{array}
\right]_q = \frac{(n)_q\,!}{(r)_q\,!\,\,(n-r)_q\,!},\qquad (n)_q:=\frac{q^n-q^{-n}}{q-q^{-1}}.
\end{equation*}
The rest of the Hopf algebra structure of $U_q(\Fg)$ is given by
\begin{align*}
& \D(K_i)=K_i\ot K_i,\quad \D(K_i^{-1})=K_i^{-1}\ot K_i^{-1} \\
& \D(E_i)=E_i\ot K_i + 1\ot E_i,\quad \D(F_j)=F_j\ot 1 + K_j^{-1}\ot F_j \\
& \ve(K_i)=1,\quad \ve(E_i)=\ve(F_i)=0\\
& S(K_i)=K_i^{-1},\quad S(E_i)=-E_iK_i^{-1},\quad S(F_i)=-K_iF_i.
\end{align*}
Let us also recall, from \cite{KlimSchm-book}, the Hopf-subalgebras
\begin{align*}
\begin{split}
& U_q(\Fb_+)={\rm Span}\left\{E^{p_1}_1\ldots E_\ell^{p_\ell}K^{q_1}_1\ldots K_\ell^{q_\ell}\,|\,r_1,\ldots r_\ell\geq 0,\,q_1,\ldots, q_\ell \in \Zb\right\},\\
& U_q(\Fb_-)={\rm Span}\left\{K^{q_1}_1\ldots K_\ell^{q_\ell}F^{r_1}_1\ldots F_\ell^{r_\ell}\,|\,p_1,\ldots p_\ell\geq 0,\,q_1,\ldots, q_\ell \in \Zb\right\},
\end{split}
\end{align*}
of $U_q(\Fg)$.

\medskip

A modular pair in involution for the Hopf algebra $U_q(\Fg)$ is given by \cite[Prop. 6.6]{KlimSchm-book}. Let $K_\lambda:=K_1^{n_1}\ldots K_\ell^{n_\ell}$ for any $\lambda=\sum_in_i\a_i$, where $n_i\in \Zb$. Then, $\rho \in \Fh^\ast$ being the half-sum of the positive roots of $\Fg$, by \cite[Prop. 6.6]{KlimSchm-book} we have
\begin{equation*}
S^2(a) = K_{2\rho}aK^{-1}_{2\rho},\qquad \forall a\in U_q(\Fg).
\end{equation*}
Thus, $(\ve,K_{2\rho})$ is a MPI for the Hopf algebra $U_q(\Fg)$.

\medskip

For the Hopf subalgebra $H:=U_q(\Fb_+) \subseteq U_q(\Fg)=:C$, we obtain $D=C/CH^+=U_q(\Fb_-)$. Then by \cite[Thm. II]{Schn90} we conclude that $C$ is (faithfully) coflat as left and right $D$-comodule, and hence the hypothesis of Theorem \ref{thm-spec-seq} is satisfied.

\begin{lemma}\label{lemma-cotor-D-K-II}
Let $C$ and $D$ be as above, and $\mu=K_1^{p_1}\ldots K_\ell^{p_\ell}$, $p_1,\ldots,p_\ell \geq 0$. Then we have
\begin{equation*}
\cotor^n_D(k,\,^\mu k) = \left\{\begin{array}{cl}
                                                                        k^{\oplus\,\frac{(p_1+\ldots+p_\ell)\,!}{p_1\,!\ldots \,p_\ell\,!}} & if\,\,n=p_1+p_2+\ldots+p_\ell, \\
                                                                        0 & if\,\,n\neq p_1+p_2+\ldots+p_\ell
                                                                      \end{array}
\right.
\end{equation*}
\end{lemma}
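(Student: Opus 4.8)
The plan is to compute $\cotor^\ast_D(k, {}^\mu k)$ directly from the two-sided cobar complex $\CB^\ast(k, D, {}^\mu k)$, using the explicit combinatorial structure of $D = U_q(\Fb_-)$ together with the fact that both comodules are one-dimensional. First I would observe that since the left $D$-coaction on $k$ is trivial and the right $D$-coaction on ${}^\mu k$ is determined by the group-like $\mu = K_1^{p_1}\cdots K_\ell^{p_\ell}$, the cobar differential on $\CB^n(k, D, {}^\mu k) = D^{\ot n}$ from \eqref{aux-cobar} reduces to
\begin{equation*}
d(c^1\ot\cdots\ot c^n) = 1\ot c^1\ot\cdots\ot c^n + \sum_{j=1}^n (-1)^j c^1\ot\cdots\ot\D(c^j)\ot\cdots\ot c^n + (-1)^{n+1} c^1\ot\cdots\ot c^n\ot \mu .
\end{equation*}
The key structural input is that $D = U_q(\Fb_-)$ has a PBW-type basis $K^{\mathbf q}F^{\mathbf r}$ with $\mathbf q\in\Zb^\ell$, $\mathbf r\in\Nb^\ell$, and that the comultiplication is ``triangular'' with respect to the $\Nb^\ell$-grading by the $F$-degree: $\D(F_i) = F_i\ot 1 + K_i^{-1}\ot F_i$, and more generally $\D$ preserves total $F$-degree while the group-likes $K_i^{\pm1}$ sit in $F$-degree $0$. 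This lets me split the complex by $F$-multidegree and reduce to a much smaller subcomplex.

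Next I would identify the relevant subcomplex. Because the boundary terms insert a $1\in D$ (the counit's group-like, $F$-degree $0$) on the left and a $\mu$ ($F$-degree $0$) on the right, the cohomology is controlled by tensor factors that are group-like-free, i.e. built from the $F_i$'s. Concretely I expect the computation to localize to the sub-DG-space spanned by ordered products of the $F_i$'s with prescribed multiplicities $(p_1,\dots,p_\ell)$, where each $F_i$ contributes one ``box'' carrying the group-like $K_i^{-1}$ that must be absorbed; the total group-like produced by a cocycle in degree $n$ must match $\mu$, forcing $n = p_1+\cdots+p_\ell$. This is exactly the same mechanism as in Crainic's $s\ell_2$ computation \cite{Crai02} and in \cite[Prop. 6.6]{KlimSchm-book}: one models the reduced cobar complex on a Koszul-type complex whose underlying space in the top degree is spanned by the $\binom{p_1+\cdots+p_\ell}{p_1,\dots,p_\ell}$ distinct orderings of the multiset $\{F_1^{p_1},\dots,F_\ell^{p_\ell}\}$, and all differentials into and out of it vanish by the grading/group-like constraint. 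The multinomial coefficient in the statement is precisely the dimension of this span.

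The technical heart — and the step I expect to be the main obstacle — is showing that this ``top-degree'' subcomplex really computes the whole $\cotor$, i.e. that the inclusion of the group-like-reduced Koszul complex into the full cobar complex of $D$ is a quasi-isomorphism, and that in the reduced complex the differential is genuinely zero in the relevant multidegree. For the first half I would argue by a filtration/spectral-sequence (or explicit contracting homotopy) exploiting that $U_q(\Fb_-)\cong U_q(\Fb_+)$-type coalgebras are cocommutative-up-to-twist on the $K$-part and that the $K_i^{\pm1}$ generate a grouplike subcoalgebra over which everything is cofree; alternatively one can invoke coflatness and a Künneth-type splitting $D\cong k[\Zb^\ell]\,\Box\,(\text{$F$-part})$ to peel off the group-ring factor, which contributes only in degree $0$. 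For the second half — vanishing of the differential on the $F$-multidegree-$(p_1,\dots,p_\ell)$ piece — I would check that applying $\D$ to any $F_i$ and re-sorting via the $q$-commutation relations $K_iF_j = q_i^{-a_{ij}}F_jK_i$ never lands back inside the span with the correct left/right group-like boundary data, so $d$ is identically zero there and every chain is a cocycle; the quantum Serre relations play no role since we only need linear-in-$F$ coproducts. Granting these two points, $\cotor^n_D(k,{}^\mu k)$ is the degree-$n$ cohomology of a complex with zero differential concentrated in degree $p_1+\cdots+p_\ell$ of dimension $\frac{(p_1+\cdots+p_\ell)!}{p_1!\cdots p_\ell!}$, which is the claim.
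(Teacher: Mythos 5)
Your overall strategy is the right one, and your ``alternative'' route is in fact the paper's actual proof: the paper applies Theorem \ref{thm-spec-seq} a second time, to the coextension $\pi:D\lra W$ onto the group-like part $W={\rm Span}\{K_1^{m_1}\cdots K_\ell^{m_\ell}\}$, so that $\cotor_W$ is concentrated in degree $0$ ($W$ is spanned by group-likes) and the $E_1$-page collapses to the single row of $W$-coinvariants of $D^{\Box_W\,i}\Box_W\,{}^\mu k$; the requirement that the total group-like coefficient in \eqref{aux-K-coaction} be the identity is what pins down the total $F$-content to $(p_1,\ldots,p_\ell)$ and produces the multinomial count. Up to this point your proposal and the paper agree.

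The genuine gap is precisely at the step you flag as the main obstacle, and your proposed resolution of it fails. It is not true that the cobar differential ``is genuinely zero'' on the $F$-multidegree-$(p_1,\ldots,p_\ell)$ piece, nor that this piece is concentrated in degree $p_1+\cdots+p_\ell$ and spanned by orderings of the multiset: for every $1\le n\le p_1+\cdots+p_\ell$ the multidegree-$(p_1,\ldots,p_\ell)$ part of $\CB^n(k,D,{}^\mu k)=D^{\ot n}$ contains chains in which one slot carries several $F$'s and another carries none, and the coboundary mixes these with the one-$F$-per-slot chains. Concretely, for $\ell=1$ and $\mu=K^2$ the degree-one chain $F^2K^2$ satisfies $d(F^2K^2)=1\ot F^2K^2-\D(F^2K^2)+F^2K^2\ot K^2$, which equals a nonzero multiple of $FK\ot FK^2$ --- exactly an element of the span you claim neither receives nor emits anything. (This survives your group-like reduction as well: $F_1^{p_1}\cdots F_\ell^{p_\ell}K_1^{p_1}\cdots K_\ell^{p_\ell}$ is already a $W$-coinvariant chain in cohomological degree $1$.) Hence the multinomial coefficient is not the dimension of a subcomplex with vanishing differential; it is the output of an honest cohomology computation on this multigraded piece, a $q$-Koszul-type calculation in which --- contrary to your closing remark --- the coproducts of higher $F$-monomials, and therefore the relations of $U_q(\Fn_-)$, do intervene, since they govern both the chain spaces and the differential. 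Supplying that calculation (or an explicit contracting homotopy off the top multidegree) is the actual content of Lemma \ref{lemma-cotor-D-K-II}, and it is absent from your proposal.
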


\begin{proof}
We apply Theorem \ref{thm-spec-seq} to the coextension
\begin{align*}
\begin{split}
& \pi:D\lra W:={\rm Span}\{K_1^{m_1}\ldots K_\ell^{m_\ell}\,|\,m_1,\ldots,m_\ell\in \Zb\}\\
& E_1^{r_1}\ldots E_\ell^{r_\ell} K_1^{m_1}\ldots K_\ell^{m_\ell}\mapsto \left\{\begin{array}{cl}
                         K_1^{m_1}\ldots K_\ell^{m_\ell} & if\,\,r_1=r_2=\ldots = r_\ell=0, \\
                         0 & otherwise,
                       \end{array}
\right.
\end{split}
\end{align*}
to have a spectral sequence, converging to $\cotor_D(k,\,^\mu k)$, whose $E_1$-term is
\begin{equation*}
E_1^{i,j} = \cotor^j_W(k,\underbrace{D\,\Box_W\,\ldots \,\Box_W\,D}_{i\,\,many}\Box_W\,^\mu k).
\end{equation*}
Since
\begin{align*}
\begin{split}
& \underbrace{D\,\Box_W\,\ldots \,\Box_W\,D}_{i\,many}\Box_W\,^\mu k =\\
& {\rm Span}\Big\{E_{a_s}^{b_s}\ldots E_{\a_s}^{\b_s}\mu K_{a_1}^{-b_1}\ldots K_{\a_1}^{-\b_1}\ldots  K_{a_s}^{-b_s}\ldots K_{\a_s}^{-\b_s}\odots\\
& \underbrace{\mu K_{a_1}^{-b_1}\ldots K_{\a_1}^{-\b_1}\odots \mu K_{a_1}^{-b_1}\ldots K_{\a_1}^{-\b_1}}_{i_2\,\,many}\ot\\
&\hspace{4cm} E_{a_1}^{b_1}\ldots E_{\a_1}^{\b_1} \mu K_{a_1}^{-b_1}\ldots K_{\a_1}^{-\b_1}\ot\underbrace{\mu\odots \mu}_{i_1\,\,many}\ot\one\Big\},
\end{split}
\end{align*}
$i\geq s\geq 0$, $\ell\geq a_1,\ldots,a_s,\ldots, \a_1,\ldots , \a_s \geq 1$, $i_1,i_2,\ldots, b_1,\ldots b_s,\b_1,\ldots,\b_s \geq 0$, the left $W$-coaction on a typical element is given by
\begin{align}\label{aux-K-coaction}
\begin{split}
& \nb_W^L(E_{a_i}K_{a_i}^{-1}\ldots K_{a_1}^{-1}\mu\odots E_{a_2}K_{a_2}^{-1}K_{a_1}^{-1}\mu\ot E_{a_1}K_{a_1}^{-1}\mu\ot\one) = \\
&K_{a_i}^{-1}\ldots K_{a_1}^{-1}\mu \ot E_{a_i}K_{a_i}^{-1}\ldots K_{a_1}^{-1}\mu\odots E_{a_2}K_{a_2}^{-1}K_{a_1}^{-1}\mu\ot E_{a_1}K_{a_1}^{-1}\mu\ot\one.
\end{split}
\end{align}
Since $W$ consists only of the group-like elements, the result follows from the $W$-coaction \eqref{aux-K-coaction} to be trivial.
\end{proof}

\begin{lemma}\label{lemma-cotor-D-C-sigma-II}
Let $C$ and $D$ be as above, and $\s=K_{2\rho}$. Then we have
\begin{equation*}
\cotor^n_D(k,\underbrace{C\,\Box_D\,\ldots \,\Box_D\,C}_{i\,\,many}\,\Box_D\,^\s k) = \left\{\begin{array}{cc}
                                                                        k^{\oplus\,\left(\begin{array}{c}
                                                                                           \ell \\
                                                                                           i
                                                                                         \end{array}
                                                                        \right)} & if\,\, n=\ell-i, \\
                                                                        0 & if\,\, n\neq \ell-i.
                                                                      \end{array}
\right.
\end{equation*}
\end{lemma}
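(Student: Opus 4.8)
The plan is to reduce the computation to Lemma~\ref{lemma-cotor-D-K-II}, by making the left $D$-comodule
$M_i := \underbrace{C\,\Box_D\,\cdots\,\Box_D\,C}_{i}\,\Box_D\,{}^\s k$
explicit and splitting it as a direct sum of one-dimensional comodules of the form ${}^\mu k$, with $\mu$ a monomial in the group-likes $K_j$. An essentially equivalent route is to run the spectral sequence of Theorem~\ref{thm-spec-seq} (in the $\cotor$-form used in the proof of Lemma~\ref{lemma-cotor-D-K-II}) for the auxiliary coextension $\pi\colon D\lra W$, check that the induced $W$-coaction on $D^{\Box_W\,a}\,\Box_W\,M_i$ is again trivial so that the sequence collapses onto the row $j=0$, and then compute the cohomology of the resulting explicit complex of vector spaces; both routes meet the same combinatorial core.

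First I would write $M_i$ out on a PBW-type basis of $C=U_q(\Fg)$, exactly in the spirit of the span displayed in the proof of Lemma~\ref{lemma-cotor-D-K-II}. Each interior symbol $\Box_D$ imposes a colinearity condition which, after applying the counit, pins the Cartan part (the $K$-exponents) of one tensor leg to the nilpotent degrees (the $E$-, resp.\ $F$-exponents) of the subsequent legs, while the final factor $\Box_D\,{}^\s k$ fixes the global shift by $\s=K_{2\rho}$. The outcome is an explicit basis of $M_i$ indexed by the nilpotent multidegrees carried by the $i$ inner legs, subject to the constraints coming from $D$ being a proper quotient coalgebra of $C$.

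Next I would read off the residual left $D$-coaction on such a basis vector $\alpha$. Since the non-grouplike part of the leftmost leg has been entirely absorbed by the cotensor conditions, and since $\D(E_j)=E_j\ot K_j+1\ot E_j$ while $\D(K_j)=K_j\ot K_j$, the coaction on $\alpha$ is diagonal, of the form $K_1^{p_1(\alpha)}\cdots K_\ell^{p_\ell(\alpha)}\ot\alpha$. Hence $M_i\cong\bigoplus_\alpha\,{}^{\mu_\alpha}k$ with $\mu_\alpha=K_1^{p_1(\alpha)}\cdots K_\ell^{p_\ell(\alpha)}$, and Lemma~\ref{lemma-cotor-D-K-II} applied summand by summand gives
\[
\cotor^n_D(k,M_i)=\bigoplus_\alpha\cotor^n_D(k,{}^{\mu_\alpha}k),
\]
each term concentrated in homological degree $|p(\alpha)|:=p_1(\alpha)+\cdots+p_\ell(\alpha)$ with multiplicity the multinomial coefficient $\dfrac{|p(\alpha)|!}{p_1(\alpha)!\cdots p_\ell(\alpha)!}$.

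The remaining, and principal, task is then purely combinatorial: to show that every basis vector $\alpha$ occurring in $M_i$ satisfies $|p(\alpha)|=\ell-i$, so that $\cotor^{*}_D(k,M_i)$ is concentrated in degree $\ell-i$, and that the weighted count $\sum_\alpha \frac{(\ell-i)!}{p_1(\alpha)!\cdots p_\ell(\alpha)!}$ equals $\binom{\ell}{i}$. This is exactly the point at which the specific value $\s=K_{2\rho}$ and the combinatorics of the positive roots of $\Fg$ are used: the coefficients of $2\rho$ in the simple roots, together with the shape of the quantum Serre relations, control which basis vectors survive the cotensor constraints. I would carry out this count by induction on the rank $\ell$, peeling off one node of the Dynkin diagram at a time and matching the Pascal recursion $\binom{\ell}{i}=\binom{\ell-1}{i-1}+\binom{\ell-1}{i}$; equivalently one can try to recognize the assembled complex $\bigl(\cotor^{*}_D(k,M_\bullet),d_1\bigr)$ as a Koszul-type complex on the $\ell$-dimensional Cartan datum. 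I expect the bookkeeping of the iterated cotensor constraints together with the $K_{2\rho}$-twist to be the delicate part of the argument, the rest being a routine unwinding of the cobar differential as in Lemma~\ref{lemma-cotor-D-K-II}.
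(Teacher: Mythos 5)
Your reduction coincides with the paper's: you describe $M_i=C^{\Box_D\,i}\Box_D\,{}^\sigma k$ on a PBW-type basis, observe that the residual left $D$-coaction is diagonal (by grouplikes in the $K_j$), split $M_i$ into one-dimensional comodules ${}^{\mu_\alpha}k$, and feed these into Lemma~\ref{lemma-cotor-D-K-II} summand by summand. Up to that point you are following the paper's proof exactly. The problem is that you then label the decisive step --- showing that every weight $\mu_\alpha$ that actually survives the cotensor constraints has total degree $\ell-i$, and that the resulting count is $\binom{\ell}{i}$ --- as a ``principal task'' to be done later, by an induction on the rank that you do not carry out. That step \emph{is} the lemma; everything before it is formal. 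Your proposed route for it is also not set up to succeed: deleting a node of the Dynkin diagram changes the expression of $2\rho$ in the simple roots, so the twist $K_{2\rho}$ does not restrict to the corresponding datum of a rank-$(\ell-1)$ subalgebra, and the quantum Serre relations play no role here. Moreover the weighted count you write down, $\sum_\alpha(\ell-i)!/\bigl(p_1(\alpha)!\cdots p_\ell(\alpha)!\bigr)$, does not visibly reduce to $\binom{\ell}{i}$, and you offer no mechanism by which it would.

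The paper disposes of this step in one stroke rather than by induction. Writing out the iterated cotensor product explicitly, it reads off in \eqref{aux-D-coact-C-D} that the $D$-coaction weight of every basis element is $\sigma K_{a_1}^{-1}\cdots K_{a_i}^{-1}$ with the indices $a_1,\ldots,a_i$ pairwise distinct (the ``no repetition on the indexes'' observation imported from the proof of Lemma~\ref{lemma-cotor-D-K-II}); the surviving weights are therefore indexed by the size-$i$ subsets of $\{1,\ldots,\ell\}$, each of total degree $\ell-i$, and Lemma~\ref{lemma-cotor-D-K-II} finishes the computation. This identification of the occurring weights is the concrete idea missing from your write-up; without it, or a worked-out substitute, the proposal does not establish either the concentration in degree $\ell-i$ or the multiplicity $\binom{\ell}{i}$.
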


\begin{proof}
Let us first note that
\begin{align*}
\begin{split}
& \underbrace{C\,\Box_D\,\ldots \,\Box_D\,C}_{i\,\,many}\,\Box_D\,{}^\s k = \\
& {\rm Span}\Big\{\s K_{a_1}^{-b_1}\ldots K_{\a_1}^{-\b_1}\ldots K_{a_{s-1}}^{-b_{s-1}}\ldots K_{\a_{s-1}}^{-\b_{s-1}}F_{a_s}^{b_s}\ldots F_{\a_s}^{\b_s}\odots  \\
&  \underbrace{\s K_{a_1}^{-b_1}\ldots K_{\a_1}^{-\b_1}\odots \s K_{a_1}^{-b_1}\ldots K_{\a_1}^{-\b_1}}_{i_2\,\,many}\ot \s F_{a_1}^{b_1}\ldots F_{\a_1}^{\b_1}\ot\underbrace{\s\odots \s}_{i_1\,\,many}\ot\one\Big\},
\end{split}
\end{align*}
$i\geq s\geq 0$, $\ell\geq a_1,\ldots,a_s,\ldots, \a_1,\ldots , \a_s \geq 1$, $i_1,i_2,\ldots, b_1,\ldots b_s,\b_1,\ldots,\b_s \geq 0$. Then, the left $D$-coaction on a typical element is given by
\begin{align}\label{aux-D-coact-C-D}
\begin{split}
& \nb_D^L(\s K_{a_1}^{-1}\ldots K_{a_{i-1}}^{-1} F_{a_i}\odots \s K_{a_1}^{-1} F_{a_2}\ot \s F_{a_1}\ot\one) =\\
& \s K_{a_1}^{-1}\ldots K_{a_{i-1}}^{-1}K_{a_i}^{-1} \ot  \s K_{a_1}^{-1}\ldots K_{a_{i-1}}^{-1} F_{a_i}\odots \s K_{a_1}^{-1} F_{a_2}\ot \s F_{a_1}\ot\one.
\end{split}
\end{align}
By the proof of Lemma \ref{lemma-cotor-D-K-II}, there is no repetition on the indexes appearing in \eqref{aux-D-coact-C-D}. Accordingly, the result follows from Lemma \ref{lemma-cotor-D-K-II}.
\end{proof}

\begin{proposition}\label{prop-C-Hochschild}
For $\s:=K_{2\rho}$ we have
\begin{equation*}
\cotor_{U_q(\Fg)}^n(k,\,^\s k)=\left\{\begin{array}{cc}
                            k^{\oplus \,2^\ell} & n=\ell \\
                            0 & n\neq \ell.
                          \end{array}
\right.
\end{equation*}
\end{proposition}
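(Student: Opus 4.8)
The plan is to apply Theorem~\ref{thm-spec-seq} to the principal coextension $\pi: C=U_q(\Fg)\lra D=U_q(\Fb_-)$ with coefficient bicomodule $V={}^\s k$, where the left $C$-comodule structure is the trivial one and the right $C$-comodule structure is given by the group-like $\s=K_{2\rho}$. Since the left coaction is trivial, the $E_1$-term of the spectral sequence reads
\begin{equation*}
E_1^{i,j}=\cotor^j_D\big(k,\;\underbrace{C\,\Box_D\,\ldots\,\Box_D\,C}_{i\,\,many}\,\Box_D\,{}^\s k\big),
\end{equation*}
converging to $HH^{i+j}(C,{}^\s k)=\cotor^{i+j}_{U_q(\Fg)}(k,{}^\s k)$ by the identification \eqref{aux-cotor-by-cobar}. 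The content of the proof is just to feed Lemma~\ref{lemma-cotor-D-C-sigma-II} into this spectral sequence: that lemma tells us exactly that $E_1^{i,j}$ is $k^{\oplus\binom{\ell}{i}}$ when $j=\ell-i$ and vanishes otherwise.

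The crucial observation is that $E_1^{i,j}\neq 0$ forces $i+j=\ell$, so all nonzero entries of the $E_1$-page lie on the single antidiagonal $i+j=\ell$. Consequently every differential $d_r:E_r^{i,j}\to E_r^{i+r,j-r+1}$ for $r\geq 1$ has either source or target (indeed, for $r\geq 1$ the target moves off the antidiagonal, since it sits on $i+j$ shifted by $1$) equal to zero, hence all higher differentials vanish and the spectral sequence degenerates at $E_1$. Therefore
\begin{equation*}
\cotor^n_{U_q(\Fg)}(k,{}^\s k)=\bigoplus_{i+j=n}E_1^{i,j}=\begin{cases} \bigoplus_{i=0}^{\ell}k^{\oplus\binom{\ell}{i}}=k^{\oplus\,2^\ell}, & n=\ell,\\[2mm] 0, & n\neq\ell,\end{cases}
\end{equation*}
using $\sum_{i=0}^{\ell}\binom{\ell}{i}=2^\ell$. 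This is exactly the claimed statement, and since the filtration is finite in each total degree there is no convergence subtlety: the abutment is genuinely the associated graded of $HH^\bullet(C,{}^\s k)$, which here is concentrated in a single degree and hence equals $HH^\bullet$ itself.

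The only real obstacle is making sure the hypotheses of Theorem~\ref{thm-spec-seq} are in force, namely that $C=U_q(\Fg)$ is coflat as a left and a right $D=U_q(\Fb_-)$-comodule for the coextension coming from $H=U_q(\Fb_+)$; but this was already established in the discussion preceding Lemma~\ref{lemma-cotor-D-K-II} via \cite[Thm. II]{Schn90}, so it is available. One should also remark that the coefficient ${}^\s k$ has trivial \emph{left} $C$-comodule structure (the MPI over $U_q(\Fg)$ being $(\ve,K_{2\rho})$, with the comodule structure implemented by the group-like $K_{2\rho}$ on one side), which is what licenses writing $V=V'\ot V''$ with $V'=k$ and $V''={}^\s k$ in the sense of Theorem~\ref{thm-spec-seq} and thus what makes $C^{\Box_D\,i}\,\Box_D\,V'=C^{\Box_D\,i}$ collapse the way we need. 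Everything else is bookkeeping with the antidiagonal degeneration argument.
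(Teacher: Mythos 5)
Your argument is correct and is essentially the paper's own proof: apply Theorem~\ref{thm-spec-seq} to the coextension $U_q(\Fg)\to U_q(\Fb_-)$, feed in Lemma~\ref{lemma-cotor-D-C-sigma-II} to see that the $E_1$-page is concentrated on the antidiagonal $i+j=\ell$, and sum $\sum_{i=0}^{\ell}\binom{\ell}{i}=2^{\ell}$. The only difference is that you spell out the degeneration-at-$E_1$ and convergence points explicitly, which the paper leaves implicit.
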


\begin{proof}
Letting $C$ and $D$ be as above, it follows from Theorem \ref{thm-spec-seq} that we have a spectral sequence converging to $\cotor_C(k,\,^\s k)$ whose $E_1$-term is
\begin{equation*}
E_1^{i,j}=\cotor^j_D(k,C^{\Box_D\,i}\,\Box_D\,^\s k).
\end{equation*}
By Lemma \ref{lemma-cotor-D-C-sigma-II} the cocycles are computed at $j=\ell-i$, \ie $i+j=\ell$. The number of cocycles, on the other hand, is
\begin{equation*}
\sum_{i=0}^\ell\left(\begin{array}{c}
                       \ell \\
                       i
                     \end{array}
\right) =2^\ell.
\end{equation*}
\end{proof}

\begin{remark}
{\rm
We note that since $C=U_q(\Fg)$ is a Hopf algebra, we could start with the Hopf subalgebra $H=C\subseteq C$ to get $D=H/H^+=W$, and yet to arrive at the same result.
}
\end{remark}

We are now ready to compute the (periodic) Hopf-cyclic cohomology of $U_q(\Fg)$.

\begin{theorem}\label{thm-Uq-g-hopf-cyclic}
For $\s:=K_{2\rho}$, and $\ell \equiv \epsilon \, ({\rm mod}\,2)$, we have
\begin{equation*}
HP^\epsilon(U_q(\Fg),\,^\s k) = k^{\oplus \,2^\ell},\qquad HP^{1-\epsilon}(U_q(\Fg),\,^\s k) = 0.
\end{equation*}
\end{theorem}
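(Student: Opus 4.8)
The plan is to pass from the coalgebra Hochschild cohomology computed in Proposition~\ref{prop-C-Hochschild} to the periodic Hopf-cyclic cohomology by feeding the former into the Hopf-cyclic bicomplex and examining the $B$-differential. First I would recall, via \cite[Lemma 5.1]{Crai02} and the interpretation of Hochschild cohomology in terms of $\cotor$-groups from Section~2, that Proposition~\ref{prop-C-Hochschild} says precisely that $HH^n(U_q(\Fg),{}^\s k)$ is $k^{\oplus 2^\ell}$ for $n=\ell$ and vanishes otherwise, where the coefficient bicomodule is ${}^\s k$ with $\s=K_{2\rho}$ acting via the MPI $(\ve,K_{2\rho})$. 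This is the $b$-cohomology sitting along the columns of the first-quadrant bicomplex $\left(CC^{\ast,\ast}(U_q(\Fg),{}^\s k),b,B\right)$ defining $HC$ and $HP$.

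Next I would run the standard $SBI$-type argument: the periodic complex is the total complex of the full bicomplex $CC^{p,q}$ with $q\ge p$ (no lower restriction), and the $E_1$-page of the filtration by columns has $E_1^{p,q}=HH^{q-p}(U_q(\Fg),{}^\s k)$, which by Proposition~\ref{prop-C-Hochschild} is concentrated on the single antidiagonal $q-p=\ell$, where it equals $k^{\oplus 2^\ell}$. Because the Hochschild cohomology is concentrated in one degree, the $B$-differential on $E_1$ necessarily vanishes for degree reasons (it would have to map between two consecutive antidiagonals, only one of which is nonzero), so the spectral sequence degenerates at $E_1$. The total degree of the surviving classes is $q+p$; since $q-p=\ell$ is fixed, the surviving line contributes in all total degrees congruent to $\ell \pmod 2$, each contributing a copy of $HH^\ell=k^{\oplus 2^\ell}$, and nothing in the opposite parity. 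Hence $HP^\epsilon(U_q(\Fg),{}^\s k)=k^{\oplus 2^\ell}$ for $\ell\equiv\epsilon\pmod 2$ and $HP^{1-\epsilon}=0$.

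I expect the main obstacle to be making the degeneration argument fully rigorous: one must confirm that the relevant grading convention makes ``$HH$ concentrated in degree $\ell$'' force the $B$-map out of $E_1$ to be zero, and that there are no convergence subtleties for the periodic (unbounded in one direction) bicomplex. The cleanest way around this is to invoke the well-known fact (e.g.\ via \cite[Thm.~18.7.1]{Kass-book}, already cited in the $\Hc_1$ section, or the Connes periodicity exact sequence) that when the Hochschild cohomology of a (co)cyclic module is concentrated in a single degree $\ell$, the periodic cyclic cohomology is just $HH^\ell$ placed in both positions of the $\Zb/2$-grading dictated by the parity of $\ell$, with the other parity vanishing. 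Substituting the value $HH^\ell=k^{\oplus 2^\ell}$ from Proposition~\ref{prop-C-Hochschild} then yields the claim. The non-periodic statement, if desired, follows the same way but truncated: $HC^n$ accumulates the contributions from total degrees $\le n$ on the single surviving antidiagonal, giving $HC^n(U_q(\Fg),{}^\s k)=k^{\oplus 2^\ell}$ for $n\ge\ell$ with $n\equiv\ell\pmod 2$ and $0$ otherwise.
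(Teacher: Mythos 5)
Your proposal is correct and matches the paper's argument in substance: both deduce everything from the fact (Proposition~\ref{prop-C-Hochschild}) that the Hochschild cohomology is concentrated in degree $\ell$, and the ``cleanest way'' you identify at the end --- Connes' $SBI$ periodicity exact sequence --- is exactly how the paper writes the proof, obtaining $HC^{n}=0$ for $n<\ell$, $HC^{\ell}\cong HH^{\ell}\cong k^{\oplus 2^\ell}$, and $S$-isomorphisms thereafter, which stabilize to the stated $HP$ groups. Your primary route via degeneration of the column-filtration spectral sequence of the $(b,B)$-bicomplex is an equivalent repackaging, and you correctly flag (and sidestep via $SBI$) the only real subtlety, namely convergence of that filtration on the unbounded periodic bicomplex.
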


\begin{proof}
Let $C$ be as above. As a result of Proposition \ref{prop-C-Hochschild} we have
\begin{equation*}
HH^n(C,\,^\s k)=\left\{\begin{array}{cc}
                            k^{\oplus \,2^\ell} & n=\ell \\
                            0 & n\neq \ell.
                          \end{array}
\right.
\end{equation*}
Hence, the Connes' SBI sequence yields
\begin{align*}
\begin{split}
& HC^{n}(C,\,^\s k) = 0,\qquad n<\ell,\\
& HC^{\ell+1}(C,\,^\s k) \cong HC^{\ell+3}(C,\,^\s k) \cong \ldots \cong 0 \\
& k^{\oplus\,2^\ell}\cong HH^{\ell}(C,\,^\s k) \cong HC^{\ell}(C,\,^\s k) \cong HC^{\ell+2}(C,\,^\s k) \cong \ldots
\end{split}
\end{align*}
where the isomorphisms are given by the periodicity map
\begin{equation*}
S:HC^{p}(C,\,^\s k)\lra HC^{p+2}(C,\,^\s k).
\end{equation*}
\end{proof}

{\bf Acknowledgement:} S. S\"utl\"u would like to thank Institut des Hautes \'Etudes Scientifiques (IHES) for the hospitality, and the stimulating environment provided during the preparation of this work.

\bibliographystyle{amsplain}
\bibliography{references}{}

\end{document}